\newtheorem{theorem}{Theorem}[section]
\newtheorem{proposition}[theorem]{Proposition}
\newtheorem{lemma}[theorem]{Lemma}
\newtheorem{definition}[theorem]{Definition}
\newtheorem{corollary}[theorem]{Corollary}
\title{Almost Sure Local Well-Posedness for the Supercritical Quintic NLS}
\author{
        Justin T. Brereton \\
}
\date{\today
}
\begin{document}

\begin{abstract}
This paper studies the quintic nonlinear Schr\"odinger equation on $\mathbb{R}^d$ with randomized initial data below the critical regularity $H^{\frac{d-1}{2}}$. The main result is a proof of almost sure local well-posedness given a Wiener Randomization of the data in $H^s$ for $s \in (\frac{d-2}{2}, \frac{d-1}{2})$. The argument further develops the techniques introduced in the work of \'A. B\'enyi, T. Oh and O. Pocovnicu on the cubic problem. The paper concludes with a condition for almost sure global well-posedness.
\end{abstract}

\maketitle

\section{Introduction} \label{mathrefs}
Consider the Cauchy problem for the nonlinear Schr\"odinger equation. Given initial data $\phi \in H^s(\mathbb{R}^d)$, for $(t,x) \in \mathbb{R} \times \mathbb{R}^d$ the solution $u(t,x) \in \mathbb{C}$ satisfies  
\begin{equation} \label{eq:1}
\begin{split}
iu_t + \Delta u &= \pm |u|^{p-1}u \\
u|_{t=0} &= \phi \\
\end{split}
\end{equation} where $+$ and $-$ correspond to the defocusing and focusing cases respectively. 
This equation has conserved mass and energy 
\begin{equation*}
\begin{split}
M(t) &= \frac{1}{2} \int_{\mathbb{R^{d}}} |u(t,x)|^2 dx\\
E(t) &= \frac{1}{2}  \int_{\mathbb{R^{d}}} |\nabla u(t,x)|^2 dx \pm \frac{1}{p+1}  \int_{\mathbb{R^{d}}} |u(t,x)|^{p+1} dx .\\ 
\end{split}
\end{equation*} 

The NLS equation is also invariant under a dilation symmetry. Given $u(t,x)$ that solves \eqref{eq:1}, $u_{\lambda}(t,x) = \lambda^{2/(p-1)}u(\lambda^2t,\lambda x)$ is a solution for every $\lambda$. Furthermore there is a Sobolev index $s_c = \frac{d}{2} - \frac{2}{p-1}$ such that the homogoneous Sobolev norm $\|u_{\lambda}\|_{\dot{H}^{s_c}}$ is constant under this scaling. This index $s_c$ is known as the scaling critical index, and when $\frac{d}{2} - \frac{2}{p-1} = s_c=1$ the problem is known as energy critical, since the energy scales like $\dot{H}^{s_c} = \dot{H}^1$. Given initial data $\phi \in H^s(\mathbb{R}^d)$, the problem is called subcritical when $s > s_c$ and supercritical when $s < s_c$. 

In addition, special pairs of exponents $(q,r)$ satisfying the bounds $2 \le q,r \le \infty$ and \newline $(q,r,d) \ne (2,\infty,2)$ are called Schr\"odinger-admissible if 
\begin{equation} \label{eq:3}
\frac{2}{q} + \frac{d}{r} = \frac{d}{2} .\\
\end{equation}
For such a pair we have the well known Strichartz estimate 
\begin{equation} \label{eq:5}
\|S(t)\phi\|_{L^q_t L^r_x(\mathbb{R}\times \mathbb {R}^d)} \le C\|\phi\|_{L^2(\mathbb{R^d})} \\
\end{equation}
where $S(t)$ denotes the linear Schr\"odinger semigroup operator $e^{it \Delta}$ that corresponds to solving the linear Schr\"odinger equation for time $t$, see \cite{Strichartz}, \cite{Yajima}.

It is known that the NLS equation is ill-posed in the supercritical case; for such $s$ one can construct special initial data $\phi \in H^s(\mathbb{R}^d)$ such that for every $T>0$, \eqref{eq:1} has no solution on $(-T,T)$ that stays in $H^s(\mathbb{R}^d)$, as demonstrated in \cite{Alazard}.  Though local well-posedness is not guaranteed, it is important to determine if there are solutions for most supercritical intial data $\phi$. This leads one to investigate the problem of almost sure well-posedness for initial data chosen for supercritical randomized initial data. Pocovnicu, B\'enyi, and Oh have proven almost sure local well-posedness for the energy critical $\mathbb{R}^4$ problem using $X^{s,b}$ spaces in \cite{Pocovnicu}. They then proved a separate result for the cubic equation for all $d \ge 3$ using $U^p$ and $V^p$ spaces and their adaptations for the Schr\"odinger equation in \cite{Pocovnicu2}. 
  
In this paper we adapt the techniques of \cite{Pocovnicu} and \cite{Pocovnicu2} in order to prove local well-posedness in the quintic case for dimension $d \ge 3$. Following \cite{Pocovnicu} we apply a Wiener Randomization to the initial data $\phi \in H^s(\mathbb{R}^d)$. This randomization method takes a function $\phi \in H^s(\mathbb{R}^d)$ and for each $\omega$ in a probability space $\Omega$ produces a randomized function 
\begin{equation} \label{eq:6}
\phi^{\omega} = \sum_{n \in \mathbb{Z}^d} g_n(\omega)\eta(D-n)\phi
\end{equation}
that is in $H^s(\mathbb{R}^d)$ with probability $1$ but gains regularity with probability $0$. The $g_n(\omega)$ are mean zero, i.i.d. complex random variables that are required to satisfy a decay condition, the Gaussian being such a random variable. The term $\eta(D-n)$ is a Fourier multiplier whose symbol approximates the characteristic function of the unit cube centered at $n$ in frequency space. 

In section 2 we present several previously known probabilistic bounds on the Wiener randomization $\phi^{\omega}$ of $\phi \in H^s(\mathbb{R}^d$ as well as its linear Schr\"odinger evolution $S(t)\phi^{\omega}$. One of these is a probabilistic bound on $\| \langle \nabla \rangle^{s} S(t)\phi^{\omega}\|_{L^qL^r(I \times \mathbb{R}^d)}$ for arbitrarily large values of $q,r$. For large enough values of $q,r$ this is a norm that scales subcritically, which means we can approach almost sure local well-posedness as if it is a subcritical problem. 

Our main result is the almost sure local well-posedness of \eqref{eq:1} with initial data $\phi^{\omega}$ chosen via the Wiener randomization of any $\phi \in H^s(\mathbb{R}^d)$:
\begin{theorem} \label{thm1}
Fix a dimension $d \ge 3$ and $s \in (\frac{d-2}{2}, \frac{d-1}{2})$. Given $\phi \in H^s(\mathbb{R}^d)$ with Wiener randomization $\phi^{\omega}$, $\omega \in \Omega$, the quintic nonlinear Schr\"odinger equation is almost surely locally well-posed. More specifically, there exist $c_1, c_2, \theta>0$, such that for sufficiently small $T \ll1$, there is a set $\Omega_T \subset \Omega$ such that $P(\Omega_T) \ge 1- c_1e^{-c_2/T^{\theta}\|\phi\|_{H^{s}}}$ and for each $\omega \in \Omega_T$, the initial value problem 
\begin{equation*}
\begin{split}
iu_t+ \Delta u &= \pm |u|^4u\\
u(0) &= \phi^{\omega} \\
\end{split}
\end{equation*}
has a unique solution in the function class $C((-T,T) \rightarrow H^s(\mathbb{R}^d))$.
\end{theorem}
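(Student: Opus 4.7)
The plan is to follow the paradigm of \cite{Pocovnicu, Pocovnicu2}: write the desired solution as $u = z + v$, where $z(t) = S(t)\phi^{\omega}$ is the free random evolution and $v$ is a nonlinear remainder forced to live at some strictly subcritical Sobolev regularity $s_1 \in (s_c, s+\alpha)$ for a small $\alpha > 0$, with $s_c = \frac{d-1}{2}$. The remainder satisfies
\begin{equation*}
v(t) = \mp i \int_0^t S(t-t')\, |z+v|^4(z+v)(t')\, dt', \qquad v(0) = 0,
\end{equation*}
and the task is to solve this by a contraction argument on a small ball in a suitable $H^{s_1}$-based function space. Since $z \in C((-T,T) \to H^s)$ almost surely and $H^{s_1}\hookrightarrow H^s$, such a $v$ yields $u \in C((-T,T) \to H^s)$ as required.

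For the analytic framework I would use the $U^p_S$, $V^p_S$ spaces adapted to the Schr\"odinger semigroup, as in \cite{Pocovnicu2}, and work in the associated $X^{s_1}$-space together with its predual $N^{s_1}$. The Duhamel map is then bounded from $N^{s_1}$ to $X^{s_1}$, and $X^{s_1}$ embeds in the full range of $L^q_t L^r_x$-Strichartz norms at regularity $s_1$. The probabilistic input from Section~2 provides, on an event $\Omega_T$ of probability at least $1-c_1 e^{-c_2/T^{\theta}\|\phi\|_{H^s}}$, a uniform a priori bound
\begin{equation*}
\|\langle\nabla\rangle^s z\|_{L^q_t L^r_x((-T,T)\times\mathbb{R}^d)} \le R
\end{equation*}
for a wide family of $(q,r)$, including some strictly above any Schr\"odinger-admissible pair, hence scaling strictly subcritically at regularity $s$.

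Next, expand $|z+v|^4(z+v) = \sum_{k=0}^{5} Q_k$, where $Q_k$ collects all monomials containing exactly $k$ factors from $\{v,\bar v\}$ and $5-k$ factors from $\{z,\bar z\}$, and estimate $\|Q_k\|_{N^{s_1}}$ term by term. For $k \ge 1$, the $v$-factors go into critical-at-$s_1$ Strichartz norms controlled by $\|v\|_{X^{s_1}}$, while each $z$-factor is placed in a large-exponent Strichartz norm bounded probabilistically. H\"older together with the strictly subcritical gap $s_1-s_c > 0$ then produces a small power $T^{\delta}$ and a contraction-type bound in $v$. The hypothesis $s > \frac{d-2}{2}$ guarantees that the subcritical gap $s_c-s$ is less than $\frac12$, which is exactly what is needed for the H\"older exponents to match up and for the derivatives to be distributable across the monomials.

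The main obstacle is the purely random term $Q_0 = |z|^4 z$, which carries no $v$-smallness and must be controlled in $N^{s_1}$ even though $s_1$ lies above the scaling critical index. The mechanism is not a probabilistic gain of Sobolev derivatives, which Wiener randomization does not provide, but a probabilistic gain of integrability: after a Littlewood-Paley decomposition $z = \sum_N z_N$, Bernstein trades integrability for a controlled frequency loss, while the probabilistic Strichartz estimate applied block by block, together with Wiener-chaos square-function bounds on the Gaussian sums $\sum_n g_n(\omega)\eta(D-n)\phi$, allows one to sum the dyadic pieces provided $s_1-s_c$ is chosen small enough. The resulting bound $\|Q_0\|_{N^{s_1}} \lesssim T^{\delta} R^5$ on $\Omega_T$, combined with the $k \ge 1$ estimates, closes the contraction on a ball in $X^{s_1}$ for $T$ sufficiently small, producing the unique $v$ and hence $u = z + v$ in the stated class.
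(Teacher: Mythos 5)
Your overall architecture is the same as the paper's: the splitting $u=z+v$, a contraction for $v$ at a regularity $\rho=s_1$ strictly between the critical index $\frac{d-1}{2}$ and $s+\frac12$, the $U^2/V^2$-based spaces $X^{s_1}$ with a dual-type norm for the Duhamel term, the probabilistic large-exponent Strichartz bound $\|\langle\nabla\rangle^{s}z\|_{L^q_tL^r_x}\lesssim T^{\theta}R$ off a small set, and a term-by-term analysis of the quintic nonlinearity according to how many factors are $v$. Up to that point the proposal is faithful to the argument actually used.

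The gap is in your treatment of the terms whose highest frequency sits on a $z$-factor, most visibly $Q_0=|z|^4z$. After Littlewood--Paley decomposition, the worst configuration is a single $z$-factor at frequency $N_5$ much larger than all other $z$-frequencies, so that the only comparable frequency is carried by the dual test function, which lives in $Y^0$ (zero regularity). The derivative $\langle\nabla\rangle^{s_1}$ then costs $N_5^{s_1}$, while the block-by-block probabilistic Strichartz bound only refunds $N_5^{-s}$, leaving a factor $N_5^{s_1-s}$ with $s_1-s> \frac{d-1}{2}-s>0$ no matter how close $s_1$ is taken to the critical index; so your claim that choosing $s_1-s_c$ small enough permits summing the dyadic pieces is false, and none of the tools you invoke for $Q_0$ (Bernstein, probabilistic Strichartz applied per block, Wiener-chaos square-function bounds) produces any negative power of the top frequency --- the randomization gains integrability, not derivatives, exactly as you note. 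What closes this case in the paper is the bilinear refinement of Strichartz, Corollary \ref{bilinear} (and its Strichartz-norm variant, Theorem \ref{visanbound}): pairing the \emph{lowest}-frequency factor with the dual function $v_6$ gains $N_{\max}^{-1/2+}$, which absorbs the excess precisely because $s_1-s<\frac12$; this is where the hypothesis $s>\frac{d-2}{2}$ (equivalently $\frac{d-1}{2}-s<\frac12$, allowing $\frac{d-1}{2}<s_1<s+\frac12$) is genuinely used, rather than for "H\"older exponents to match up." The complementary configuration in which the two highest frequencies both sit on $z$-factors is handled separately by splitting the derivative as $\langle\nabla\rangle^{s_1/2}$ on each, using $s_1<2s$ (the paper's Case 3), and the same bilinear gain is also what rescues the mixed terms (Cases 4.b--4.e) where a lone high-frequency $z$ is matched only by $v_6$. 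Without importing such a bilinear estimate (or an equivalent multilinear refinement), your contraction does not close for $Q_0$, so the proposal as written has a genuine missing ingredient rather than an alternative route.
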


We now provide a brief outline of the proof. In section 3 we define the Littlewood-Paley projection operator, as well as the $U^2$ and $V^2$ spaces and their Schr\"odinger analogues, and in section 4 we present Strichartz estimates as well as a bilinear estimate for these spaces. The next step is to split the NLS solution $u$ into it's linear part $z(t) = S(t)\phi^{\omega}$ and nonlinear part 
\begin{equation}
v(t)= \pm \int_{0}^t -iS(t-t')[|v+z|^4(v+z)](t')dt' ,
\end{equation} the integral term of Duhamel's formula. Our probabilistic bounds tells us that $z$ almost surely has the same regularity as the initial data $\phi^{\omega}$. Therefore the linear part of the solution is almost surely in the supercritical space $H^s(\mathbb{R}^d)$, and it remains to prove existence of the nonlinear part $v(t)$. As mentioned earlier, $z(t)$ is bounded in subcritical norms, which means we can treat our linear solution $z(t)$ as a subcritical perturbative term in the Cauchy problem 
\begin{equation} \label{non}
\begin{split}
iv_t + \Delta v &= \pm (v+z)|v+z|^4 \\
v(0)  &= 0 \\
\end{split}
\end{equation}
that is satisfied by the nonlinear part $v$.

This means almost sure local well-posedness of $v(t)$ is essentially a subcritical problem. We prove local existence of the nonlinear part $v(t)$ using a fixed point argument based on doing a frequency decompostion of $v(t)$ and bounding it at each frequency.

Global well-posedness is a much harder problem. There is yet to be a proof of almost sure global well-posedness of any supercritical NLS problem. 
Pocovnicu, B\'enyi, and Oh proved almost sure global well-posedness of $v \in H^1(\mathbb{R}^4)$ for the cubic problem under the assumption that there is a probabilistic bound on $\|v\|_{L^{\infty}H^1(\mathbb{R} \times\mathbb{R}^4)}$ in \cite{Pocovnicu2}. It seems difficult to prove such a bound. 

One could probably prove a similar result for the $3$ dimensional quintic problem, the energy critical dimension for the quintic problem. Instead we prove almost sure global well-posedness of $v$ in the subcritical space ${S}^{1+c}(\mathbb{R} \times \mathbb{R}^3)$ assuming the norm $\|v\|_{L^{10}L^{10}([-T,T]\times \mathbb{R}^3)}$ does not blow up in finite time. This means that a probabilistic a priori estimate for $\|v\|_{L^{10}L^{10}([-T,T]\times \mathbb{R}^3)}$ implies almost sure global well-posedness as expressed in the following result:

\begin{theorem} \label{thm2}
Assume $\frac{7}{8} < s < 1$ and $0 < c < \frac{1}{8}$. Suppose we have a probabilistic a priori estimate for $\|v\|_{L^{10}L^{10}([-T,T]\times \mathbb{R}^3)}$, meaning for every $T,R > 0$ there is a function $\alpha(T,R)$ and a set $\Omega'_{T,R}$ such that 
\begin{itemize}

\item{} For any $\omega \in \Omega'_{T,R}$, if the solution $v(t)$ to (\ref{non}) exists on $(-T,T)$ then we have the bound 
\begin{equation*}
\|v\|_{L^{10}L^{10}([-T,T]\times \mathbb{R}^3)}<R
\end{equation*}

\item{} $P(\Omega_{T,R}') \ge 1 -\alpha(T,R)$

\item{} $\forall T>0: \lim_{R\rightarrow \infty} \alpha(T,R) = 0$.

\end{itemize}
Then given $\phi \in H^s(\mathbb{R}^3)$ with Wiener randomization $\phi^{\omega}$, the initial value problem 
\begin{equation*}
\begin{split}
iu_t+ \Delta u &= \pm |u|^4u\\
u(0) &= \phi^{\omega} \\
\end{split}
\end{equation*}
is almost surely globally wellposed, meaning there is a set $\Omega_{T,R} \subset \Omega$ and constants $c_1,c_2,c_3 >0$ such that 
\begin{equation*}
P \left( \Omega_{T,R} \right) \ge 1-c_1e^{-c_2R^2} - c_3\alpha(T,R)
\end{equation*}
and for any $\omega \in \Omega_{T,R}$ the above equation has a unique solution in the function class \newline $C((-T,T) \rightarrow H^s(\mathbb{R}^d))$ with $v(t) \in H^{1+c}(\mathbb{R}^3)$ for any time $t \in (-T,T)$.
\end{theorem}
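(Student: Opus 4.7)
The plan is to turn the hypothesized a priori bound on $\|v\|_{L^{10}L^{10}}$ into a blow-up criterion that upgrades the local theory to global existence on $(-T,T)$, while simultaneously establishing the regularity $v(t)\in H^{1+c}$. Fix $T,R>0$. First I would restrict attention to the intersection $\Omega_{T,R} = \Omega'_{T,R} \cap \Omega''_{T,R}$, where $\Omega''_{T,R}$ is the high-probability event on which the probabilistic estimates of Section 2 control a family of subcritical Strichartz norms of $z(t) = S(t)\phi^\omega$ on $(-T,T)$ by a polynomial in $R$. The Gaussian tail bounds used in the Wiener randomization give $P(\Omega''_{T,R}) \ge 1 - c_1 e^{-c_2 R^2}$, so the union bound yields the desired probability estimate
\begin{equation*}
P(\Omega_{T,R}) \ge 1 - c_1 e^{-c_2 R^2} - c_3 \alpha(T,R).
\end{equation*}

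Next I would establish local existence for (\ref{non}) in the subcritical space $S^{1+c}$. Since $s_c = 1$ for the quintic problem on $\mathbb{R}^3$ and $c > 0$, the space $H^{1+c}$ is genuinely subcritical. Expanding $(v+z)|v+z|^4$ into monomials in $v, \overline{v}, z, \overline{z}$ of total degree five, the pure $v$-terms are handled by standard subcritical multilinear estimates, while each mixed term with at least one factor of $z$ is controlled by placing the $z$-factors into the subcritical Strichartz norms available on $\Omega''_{T,R}$ and the $v$-factors into a combination of $L^{10}L^{10}$ and $S^{1+c}$. Because $v(0) = 0 \in H^{1+c}$, a standard contraction-mapping argument produces a local-in-time solution $v \in S^{1+c}(J)$ on a short interval $J$, with lifespan depending only on $\|v(t_0)\|_{H^{1+c}}$ and on the Strichartz norms of $z$.

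To pass from local to global on $(-T,T)$, I would run a continuity/bootstrap argument using the a priori bound $\|v\|_{L^{10}L^{10}([-T,T]\times\mathbb{R}^3)} < R$ as the hypothesis. Partition $(-T,T)$ into finitely many subintervals $J_1, \ldots, J_N$ whose lengths depend only on $R$, on $c$, and on the Strichartz bounds for $z$, chosen so that on each $J_k$ the contribution of $v$ to the nonlinearity is small. On each $J_k$ the local theory produces an $S^{1+c}$ solution whose norm grows by at most a bounded multiplicative factor; gluing along common endpoints yields $v \in C((-T,T) \to H^{1+c}(\mathbb{R}^3))$. Uniqueness at the $H^s$ level then follows from Theorem \ref{thm1} applied on each subinterval.

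The hard part will be the multilinear estimate at regularity $H^{1+c}$ for the mixed terms $v^j \overline{v}^k z^l \overline{z}^m$ with $l+m \ge 1$: since $z$ has only $H^s$ regularity with $s < 1 < 1+c$, a derivative $\langle \nabla\rangle^{1+c}$ falling on $z$ cannot be controlled directly. One must use a paraproduct-type frequency decomposition, placing high frequencies of $z$ into the smoothing-improved Strichartz norms produced by the Wiener randomization, and using the a priori $L^{10}L^{10}$ control on $v$ as the bridge between the rough linear piece and the smoother nonlinear piece. Balancing these scales is precisely what forces the restrictions $\tfrac78 < s$ and $c < \tfrac18$ in the hypothesis.
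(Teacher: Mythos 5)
Your overall architecture is the same as the paper's: intersect the hypothesized set $\Omega'_{T,R}$ with a high-probability set on which the randomization-improved Strichartz norms of $z$ are controlled (giving the stated probability bound), use the a priori $L^{10}L^{10}$ bound to split the time interval into finitely many pieces on which both $\|v\|_{L^{10}L^{10}}$ and the relevant norms of $\langle\nabla\rangle^s z$ are small, prove a persistence-of-regularity estimate $\|v\|_{S^{1+c}(I)}\lesssim \|v(t_1)\|_{H^{1+c}}+C(\epsilon)$ on each piece, and iterate/glue (the paper phrases the last step as a contradiction with a maximal interval of existence, which is only cosmetically different from your bootstrap-and-glue). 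A small inaccuracy: the subintervals cannot be chosen with lengths depending only on $R$ and the $z$-bounds, since the decomposition must track where the $L^{10}_{t,x}$ mass of $v$ concentrates; what is controlled is the \emph{number} of subintervals, at most of order $(R/\epsilon)^{10}$, which suffices.

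The genuine gap is in the step you yourself flag as the hard part. For a mixed term whose highest frequency $N$ sits on a $z$ factor, the derivative loss is $N^{1+c-s}$ with $1+c-s>0$ (since $s<1$), and the Wiener randomization improves \emph{integrability} of $z$ (bounds on $\|\langle\nabla\rangle^{s}z\|_{L^qL^r}$ for large, even non-admissible, exponents) but gives no regularity beyond $s$; the dual factor $w_6$ carries no derivative weight, and the low-frequency $v$ factors give no negative powers of $N$ by H\"older alone. So a paraproduct decomposition plus randomized Strichartz norms, as you propose, cannot make the dyadic sum over $N$ converge. The missing ingredient is a bilinear Strichartz estimate valid for approximate solutions (the paper imports Visan's Lemma 2.5, stated as Theorem \ref{visanbound}), pairing the high-frequency $z$ with a low-frequency $v$ or $z$ factor to gain $N^{-1/2+\delta}$, which after interpolation with the H\"older/$L^{10}L^{10}$ bound yields a net factor $N^{3/4+c-s+\delta}$; this is summable precisely when $s>\tfrac34+c$, which is where the hypotheses $s>\tfrac78$, $c<\tfrac18$ actually enter. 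Note also that the relevant bilinear input must apply to $v$, which is not a free evolution, so the estimate has to tolerate the inhomogeneity (hence the $\|(i\partial_t+\Delta)u\|_{L^{3/2}L^{18/13}}$ terms in the paper's Lemmas 6.4--6.6 and the ensuing self-improving inequality (\ref{81}) that absorbs $\|\langle\nabla\rangle f\|_{L^{3/2}L^{18/13}}$ for $\epsilon\ll 1/R$); without this mechanism your contraction/bootstrap at regularity $1+c$ does not close.
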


\section{Randomization of Initial Data and Probabilistic Estimates} \label{mathrefs}

Our method of randomization is the Wiener decomposition of the frequency space that was used in \cite{Pocovnicu}. Consider a Schwartz class function $\psi \in \mathcal{S}(\mathbb{R}^d)$ that approximates the cube of unit length centered at the origin in $\mathbb{R}^d$, meaning that $\psi$ is supported on $[-1,1]^d$ and \newline $\displaystyle \sum_{n \in \mathbb{Z}^d} \psi(\xi -n)$ is identically $1$. Then for each $n$, define the fourier multiplier $\eta$ as
\begin{equation} \label{eq:11}
\eta(D-n)u(x) = \mathcal{F}^{-1}[\psi(\xi-n)\mathcal{F}u] .
\end{equation}  
Note that this satisfies $\displaystyle \sum_{n \in \mathbb{Z}^d} \eta(D-n)u(x) = u(x)$. This provides a decomposition of the function $u$ into pieces whose frequencies are localized to cubes. 

The idea is then to consider a function $\phi \in H^s(\mathbb{R}^d)$ and for each $\omega$ from a probability space $\Omega$ create a randomized function $\displaystyle \sum_{n \in \mathbb{Z}^d} g_n(\omega)\eta(D-n)\phi$ for some random variables $g_n$. For each $n \in \mathbb{Z}^d$ let $\mu_n$ and $\nu_n$ be probability distributions on $\mathbb{R}$, symmetric about $0$, such that for some constant $c$ we have 
\begin{equation} \label{eq:14}
\begin{split}
\left| \int_{\mathbb{R}^d} e^{\lambda x}d\mu_n(x) \right| &\le e^{c\lambda^2} \\
\left| \int_{\mathbb{R}^d} e^{\lambda x}d\nu_n(x) \right| &\le e^{c\lambda^2} \\
\end{split}
\end{equation}
for all $n \in \mathbb{Z}^d$, $\lambda \in \mathbb{R}$. A Gaussian random variable would be an example of a random variable with these properties. Then define each $g_n$ to be an independent, mean zero, complex random variable on $\Omega$ such that $\text{Re}(g_n)$ and $\text{Im}(g_n)$ have distributions $\mu_n, \nu_n$. We define the Wiener randomization $\phi^{\omega}$ of $\phi \in H^s(\mathbb{R}^d)$ to be 
\begin{equation} \label{eq:15}
\phi^{\omega} = \sum_{n \in \mathbb{Z}^d} g_n(\omega)\eta(D-n)\phi .\\ 
\end{equation}

The main advantage derived from the Wiener Randomization is improved $L^p(\mathbb{R}^d)$ estimates on the randomized initial data $\phi^{\omega}$ off a small set, as a result of a stronger Bernstein's inequality. Despite only requiring that $\phi$ be in $H^s$, the randomized $\phi^{\omega}$ is in $L^P (\mathbb{R}^d)$ with probability $1$. In addition we have a probabilistic bound on $\|\phi^{\omega}\|_{H^s(\mathbb{R}^d)}$, which implies that $\phi^{\omega} \in H^s(\mathbb{R}^d)$ almost surely. 

We have the following key bounds on $\phi^{\omega}$ and its linear Schr\"odinger evolution with proofs from \cite{Pocovnicu}. I omit the proof of the second and third.  
For all $R>0, s>0$, and $\phi \in H^s(\mathbb{R}^d)$ we have:
\begin{equation*} 
\begin{split}
 P\left ( \|\phi^{\omega}\|_{H^s(\mathbb{R}^d)} > R \right ) &\le c_1e^{-c_2R^2/\|\phi\|^2_{H^s(\mathbb{R}^d)}}, \\
 P\left ( \|S(t)\phi^{\omega}\|_{L_t^qL_x^r([0,T]\times \mathbb{R}^d}) > R \right ) &\le c_1e^{-c_2R^2/T^{2/q}\|\phi\|^2_{L^2(\mathbb{R}^d)}}, \\
 P\left (\|\phi^{\omega}\|_{L^p(\mathbb{R}^d)} > R \right ) &\le c_1e^{-c_2R^2/\|\phi\|_{L^2(\mathbb{R}^d)}} .\\
\end{split}
\end{equation*}
\begin{lemma} \label{thm2.1} 
Given $\phi \in H^s$ with randomization $\phi^{\omega}$, for all $R>0$ there exist positive constants $c_1, c_2$ such that: 
\begin{equation}\label{eq:18}
 P\left ( \|\phi^{\omega}\|_{H^s(\mathbb{R}^d)} > R \right) \le c_1e^{-c_2R^2/\|\phi\|^2_{H^s(\mathbb{R}^d)}}. \\
\end{equation} 
\end{lemma}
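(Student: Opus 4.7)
The plan is to bound high moments of $\|\phi^{\omega}\|_{H^s(\mathbb{R}^d)}$ using a Khintchine-type inequality for the $g_n$, and then convert this into a sub-Gaussian tail bound via Chebyshev's inequality. This is the standard route for such estimates, and the sub-Gaussian assumption (2.14) on the real/imaginary parts of $g_n$ is exactly what is needed.

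First I would establish a Khintchine-type bound: for any $\{c_n\} \in \ell^2(\mathbb{Z}^d)$ and $p \ge 2$,
\begin{equation*}
\left(\mathbb{E}\Bigl|\sum_{n \in \mathbb{Z}^d} c_n g_n(\omega)\Bigr|^p\right)^{1/p} \le C\sqrt{p}\,\Bigl(\sum_{n \in \mathbb{Z}^d}|c_n|^2\Bigr)^{1/2}.
\end{equation*}
This follows in the standard way: write $\lambda \sum c_n g_n$ in terms of its real and imaginary parts, use independence to factor the moment generating function into a product, bound each factor by (2.14), and conclude $\mathbb{E}\exp(\lambda \sum c_n g_n) \le \exp(c\lambda^2 \sum|c_n|^2)$; then optimize $\lambda$ to recover the $\sqrt{p}$ moment bound.

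Next I would take the $p$-th moment of $\|\phi^{\omega}\|_{H^s}$, with $p \ge 2$ to be chosen later, and apply Minkowski's integral inequality in the form $\|\cdot\|_{L^p_{\omega}L^2_{\xi}} \le \|\cdot\|_{L^2_{\xi}L^p_{\omega}}$:
\begin{equation*}
\bigl(\mathbb{E}\|\phi^{\omega}\|_{H^s}^p\bigr)^{1/p}
\le \left(\int_{\mathbb{R}^d}(1+|\xi|^2)^s\,\Bigl(\mathbb{E}\Bigl|\sum_{n} g_n(\omega)\widehat{\eta(D-n)\phi}(\xi)\Bigr|^p\Bigr)^{2/p}d\xi\right)^{1/2}.
\end{equation*}
Applying the Khintchine bound with $c_n = \widehat{\eta(D-n)\phi}(\xi)$ and then using the fact that the symbols $\psi(\xi-n)$ have supports with uniformly bounded overlap (so that $\sum_n |\widehat{\eta(D-n)\phi}(\xi)|^2 \lesssim |\widehat{\phi}(\xi)|^2$), one obtains
\begin{equation*}
\bigl(\mathbb{E}\|\phi^{\omega}\|_{H^s}^p\bigr)^{1/p} \le C\sqrt{p}\,\|\phi\|_{H^s(\mathbb{R}^d)}.
\end{equation*}

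Finally, Chebyshev's inequality gives $P(\|\phi^{\omega}\|_{H^s}>R) \le (C\sqrt{p}\,\|\phi\|_{H^s}/R)^p$; choosing $p = R^2/(C^2 e\,\|\phi\|_{H^s}^2)$ (assuming $p \ge 2$, which covers all $R$ above a fixed threshold; smaller $R$ are trivial by adjusting $c_1$) yields the advertised bound $c_1 e^{-c_2 R^2/\|\phi\|_{H^s}^2}$. The only mildly delicate step is the Khintchine inequality for complex, not-necessarily-Gaussian variables, but this is handled cleanly by (2.14); otherwise the argument is entirely standard.
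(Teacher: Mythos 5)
Your proposal is correct and follows essentially the same route as the paper's proof: Minkowski's integral inequality to pass the $L^p_\omega$ norm inside $L^2_\xi$, the Khintchine-type $\sqrt{p}$ moment bound for sums of sub-Gaussian variables (which the paper cites from Burq--Tzvetkov rather than reproving via the moment generating function), Markov's inequality, and optimization in $p$ with the $p<2$ regime handled trivially by enlarging $c_1$. The only cosmetic difference is that you make the finite-overlap estimate $\sum_n |\psi(\xi-n)\widehat{\phi}(\xi)|^2 \lesssim |\widehat{\phi}(\xi)|^2$ explicit, which the paper compresses into a single line.
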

\begin{proof} The proof is taken from \cite{Pocovnicu}. By Minkowski's Inequality, we have for $p \ge 2$,
\begin{equation}
\begin{split} \label{21}
\mathbb{E}[\|\phi^{\omega}\|^p_{H^s(\mathbb{R}^d)} ] &\le \left ( \left \| \|\langle \nabla \rangle^s \phi^{\omega} \|_{L^p(\Omega)} \right \|_{L^2(\mathbb{R}^d)} \right )^p \\
&= \left ( \left \| \| \sum_{n \in \mathbb{Z}^d} g_n(\omega)\langle \nabla \rangle^s \eta(D-n)\phi\|_{L^p(\Omega)} \right\|_{L^2(\mathbb{R}^d)} \right )^p .\\
\end{split}
\end{equation}

By a well known lemma on sums of random variables, stated as Lemma 2.1 in \cite{Pocovnicu} and proven in \cite{Burq}, and the fact that Fourier multipliers commute, we have
\begin{equation}\label{22}
\begin{split} 
&\le C \left ( \left \| \sqrt{p}\| g_n\langle \nabla \rangle^s \eta(D-n)\phi\|_{l^2(n \in \mathbb{Z}^d)}\right \|_{L^2(\mathbb{R}^d)} \right )^p \\
\mathbb{E}[\|\phi^{\omega}\|^p_{H^s(\mathbb{R}^d)} ] &\le C \left (\sqrt{p}\|\phi\|_ {H^s(\mathbb{R}^d)}\right )^p .\\
\end{split}
\end{equation}

So by Markov's Inequality
\begin{equation}\label{25}
\begin{split}
 R^p P \left ( \|\phi^{\omega}\|_{H^s(\mathbb{R}^d)} > R \right ) &\le C \left (\sqrt{p}\|\phi\|_ {H^s(\mathbb{R}^d)}\right )^p \\
P \left ( \|\phi^{\omega}\|_{H^s(\mathbb{R}^d)} > R \right ) &\le \frac{ \left (C_0\sqrt{p}\|\phi\|_ {H^s(\mathbb{R}^d)}\right )^p}{R^p} .\\
\end{split}
\end{equation}
Now let $p= \left ( \frac{R}{C_0e\|\phi\|_{H^s}} \right )^2$ with $C_0$ taken from above. There are two cases.

\begin{itemize}
\item{} $p < 2$: In this case we cannot use the above work becuase it assumes $p \ge 2$ for Minkowski's inequality. 
Letting $c_2=\frac{1}{C_0^2e^2}$ we have $e^{-c_2R^2/\|\phi\|_{H^s}} \ge e^{-2}$. Now choosing $c_1 \ge e^2$ we have
\begin{equation}
\begin{split}
c_1e^{-c_2R^2/\|\phi\|_{H^s}^2} &\ge c_1e^{-2} \\
c_1e^{-c_2R^2/\|\phi\|_{H^s}^2} &\ge 1 \\
c_1e^{-c_2R^2/\|\phi\|_{H^s}^2} &\ge P\left ( \|\phi^{\omega}\|_{H^s(\mathbb{R}^d)} > R \right ), \\
\end{split}
\end{equation}
since every probabilistic outcome has probability less than $1$.

\item{} $p \ge 2$: From the definition above and equation (\ref{25}), we have
\begin{equation}
\begin{split}
P \left ( \|\phi^{\omega}\|_{H^s(\mathbb{R}^d)} > R \right ) &\le e^{-p} \\
P \left ( \|\phi^{\omega}\|_{H^s(\mathbb{R}^d)} > R \right ) &\le e^{-c_2R^2/\|\phi\|^2_{H^s}} .\\
\end{split}
\end{equation}
\end{itemize}
In both cases the lemma is proven.
\end{proof}

\begin{lemma} \label{thm2.2}
Given $\phi \in H^s$ with randomization $\phi^{\omega}$, for all $R>0$ there exist positive constants $c_1, c_2$ such that: 
\begin{equation}
\label{eq:8} P\left ( \|S(t)\phi^{\omega}\|_{L^q_tL^r_x([0,T] \times \mathbb{R}^d)} > R \right ) \le c_1e^{-c_2R^2/T^{2/q}\|\phi\|^2_{L^2}}. \\
\end{equation} 
\end{lemma}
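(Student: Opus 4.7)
The plan is to mimic the structure of the proof of Lemma \ref{thm2.1}: bound $\mathbb{E}[\|S(t)\phi^\omega\|^p_{L^q_tL^r_x}]$ for large $p$, then deduce the tail bound by Markov's inequality and optimize in $p$. First, assuming $p \ge \max(q,r)$, I would apply Minkowski's integral inequality to interchange the order of norms:
\begin{equation*}
\mathbb{E}\bigl[\|S(t)\phi^\omega\|^p_{L^q_tL^r_x([0,T]\times\mathbb{R}^d)}\bigr] \le \Bigl(\bigl\|\|S(t)\phi^\omega\|_{L^p(\Omega)}\bigr\|_{L^q_tL^r_x([0,T]\times\mathbb{R}^d)}\Bigr)^p.
\end{equation*}
Because $S(t)$ commutes with the Fourier multipliers $\eta(D-n)$, the function $S(t)\phi^\omega$ is pointwise (in $t,x$) a sum $\sum_n g_n(\omega)\,\eta(D-n)S(t)\phi$ of independent mean-zero random variables satisfying \eqref{eq:14}. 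I would then invoke the same Khintchine-type lemma used in the proof of Lemma \ref{thm2.1} (Lemma 2.1 of \cite{Pocovnicu}) to estimate
\begin{equation*}
\|S(t)\phi^\omega\|_{L^p(\Omega)} \le C\sqrt{p}\,\Bigl(\sum_{n\in\mathbb{Z}^d} |\eta(D-n)S(t)\phi|^2\Bigr)^{1/2}.
\end{equation*}

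The key step, which replaces the trivial $L^2$ bound used in Lemma \ref{thm2.1}, is to control the $L^q_tL^r_x$ norm of this square function. Since $q,r\ge 2$, one more application of Minkowski in the $\ell^2$-sum yields
\begin{equation*}
\Bigl\|\Bigl(\sum_n |\eta(D-n)S(t)\phi|^2\Bigr)^{1/2}\Bigr\|_{L^q_tL^r_x} \le \Bigl(\sum_n \|\eta(D-n)S(t)\phi\|^2_{L^q_tL^r_x([0,T]\times\mathbb{R}^d)}\Bigr)^{1/2}.
\end{equation*}
Now each summand is frequency-localized to a unit cube, so the improved (unit-frequency-cube) Bernstein inequality gives $\|\eta(D-n)S(t)\phi\|_{L^r_x}\lesssim\|\eta(D-n)S(t)\phi\|_{L^2_x}$ uniformly in $n$. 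Combined with $L^2$-unitarity of $S(t)$ and H\"older in time on $[0,T]$, each summand is controlled by $T^{1/q}\|\eta(D-n)\phi\|_{L^2_x}$, and the near-orthogonality of the cubes $\{\eta(D-n)\phi\}_n$ yields $\sum_n\|\eta(D-n)\phi\|^2_{L^2}\lesssim\|\phi\|^2_{L^2}$. Putting this together gives
\begin{equation*}
\bigl(\mathbb{E}\|S(t)\phi^\omega\|^p_{L^q_tL^r_x}\bigr)^{1/p} \le C\sqrt{p}\,T^{1/q}\|\phi\|_{L^2}.
\end{equation*}

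Finally, by Markov's inequality, $P(\|S(t)\phi^\omega\|_{L^q_tL^r_x}>R)\le (C\sqrt{p}\,T^{1/q}\|\phi\|_{L^2}/R)^p$, and the choice $p=(R/(C_0 e\,T^{1/q}\|\phi\|_{L^2}))^2$ produces the bound $e^{-c_2R^2/(T^{2/q}\|\phi\|^2_{L^2})}$. As in Lemma \ref{thm2.1}, the case when this choice of $p$ is smaller than $\max(q,r)$ is handled trivially by absorbing the loss into $c_1$, since probabilities are bounded by $1$. The main technical obstacle is the Bernstein step: without the fact that $\eta(D-n)$ localizes to a frequency cube of fixed size, one could not trade the $L^r_x$ norm for an $L^2_x$ norm uniformly in $n$, and consequently could not extract Strichartz-admissible pairs of arbitrarily large $q$ and $r$; it is precisely this step that makes the Wiener randomization advantageous over classical Strichartz bounds.
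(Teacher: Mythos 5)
Your proof is correct and is essentially the argument the paper intends: the paper omits this proof, deferring to \cite{Pocovnicu}, but the cited argument is exactly your combination of Minkowski's inequality, the Khintchine-type lemma, the unit-cube Bernstein inequality together with unitarity of $S(t)$ and integration in time over $[0,T]$, followed by Markov's inequality and optimization in $p$ with the small-$p$ case absorbed into $c_1$, just as in the paper's proofs of Lemmas \ref{thm2.1} and \ref{thm2.3}. No gaps to report.
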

After multiplying $R$ by a small power of $T$ we have the following corollary. 
\begin{corollary}
For small $\theta \in [0, \frac{1}{q})$ and $R>0$ there exists $c_2,c_2$ such that:
\begin{equation}
\begin{split}
\label{eq:8} P\left ( \|S(t)\phi^{\omega}\|_{L^q_tL^r_x([0,T] \times \mathbb{R}^d)} > T^{\theta}R \right ) &\le c_1e^{-c_2R^2/T^{2/q-2\theta}\|\phi\|^2_{L^2}} \\
&\le c_1e^{-c_2R^2/\|\phi\|^2_{L^2}} .\\
\end{split}
\end{equation} 
\end{corollary}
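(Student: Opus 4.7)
The plan is to simply specialize Lemma \ref{thm2.2} to the scaled threshold $T^{\theta}R$ in place of $R$, then use the hypothesis $\theta < 1/q$ together with the standing assumption $T \ll 1$ to absorb the factor of $T$ in the exponent.

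For the first step, I would apply Lemma \ref{thm2.2} with $R$ replaced by $T^{\theta}R$. This is valid since $T^{\theta}R > 0$, and it immediately yields
\begin{equation*}
P\left( \|S(t)\phi^{\omega}\|_{L^q_t L^r_x([0,T]\times \mathbb{R}^d)} > T^{\theta}R \right)
\le c_1 e^{-c_2 (T^{\theta}R)^2 / T^{2/q}\|\phi\|^2_{L^2}}
= c_1 e^{-c_2 R^2 / T^{2/q - 2\theta}\|\phi\|^2_{L^2}},
\end{equation*}
which is exactly the first claimed inequality.

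For the second step, observe that the constraint $\theta \in [0, \tfrac{1}{q})$ gives $2/q - 2\theta > 0$. Since we are restricted to the regime $T \ll 1$ (in particular $T \le 1$), we have $T^{2/q - 2\theta} \le 1$, and hence
\begin{equation*}
\frac{c_2 R^2}{T^{2/q - 2\theta} \|\phi\|_{L^2}^2} \ge \frac{c_2 R^2}{\|\phi\|_{L^2}^2},
\end{equation*}
so that the exponential on the right-hand side is dominated by $c_1 e^{-c_2 R^2 / \|\phi\|^2_{L^2}}$. Chaining the two inequalities completes the proof.

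There is really no substantive obstacle here; the statement is essentially a bookkeeping corollary of Lemma \ref{thm2.2}, and the only subtlety is ensuring one records the restriction $T \le 1$ (which is already in force in the applications to Theorem \ref{thm1}). The usefulness of the corollary, as opposed to its difficulty, lies in the fact that the small-$T^{\theta}$ gain in the threshold is precisely what will later allow the linear evolution $z(t) = S(t)\phi^{\omega}$ to be treated as a subcritical perturbation in the fixed-point argument for $v(t)$.
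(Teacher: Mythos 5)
Your proposal is correct and matches the paper's reasoning exactly: the paper obtains the corollary by substituting $T^{\theta}R$ into Lemma \ref{thm2.2} and then using $\theta < \tfrac{1}{q}$ with $T \ll 1$ so that $T^{2/q-2\theta} \le 1$, which is precisely your two steps. Nothing is missing.
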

In addition,  placing derivatives inside and noting that derivatives commute with fourier multipliers such as $S(t)$ and the map $\phi \rightarrow \phi^{\omega}$, we have our main bound:
\begin{theorem} \label{zbound}
Given small $\theta \in [0, \frac{1}{q})$ and $\phi^{\omega}$ chosen according to a Wiener randomization, for all $R>0$ there exists $c_2,c_2$ such that:
\begin{equation}
\begin{split}
\label{eq:8} P\left (\|\langle \nabla \rangle^{s} S(t)\phi^{\omega}\|_{L^q_tL^r_x([0,T] \times \mathbb{R}^d)} > T^{\theta}R \right ) &\le c_1e^{-c_2R^2/T^{2/q-2\theta}\|\phi\|^2_{H^s}} \\
&\le c_1e^{-c_2R^2/\|\phi\|^2_{H^s}} .\\
\end{split}
\end{equation} 
\end{theorem}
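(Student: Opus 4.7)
The plan is to reduce Theorem \ref{zbound} directly to the preceding Corollary by exploiting the commutativity of the Bessel potential operator $\langle \nabla \rangle^s$ with both the linear Schr\"odinger propagator $S(t)$ and the randomization map $\phi \mapsto \phi^{\omega}$. The point is that inserting $\langle \nabla \rangle^s$ in front of $S(t)\phi^\omega$ is equivalent to running the Corollary not on $\phi$ but on $\langle \nabla \rangle^s \phi$, and the latter has $L^2$ norm equal to $\|\phi\|_{H^s}$.

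Concretely, I would set $\psi := \langle \nabla \rangle^s \phi$; since $\phi \in H^s(\mathbb{R}^d)$ we have $\psi \in L^2(\mathbb{R}^d)$ with $\|\psi\|_{L^2} = \|\phi\|_{H^s}$. Because $\langle \nabla \rangle^s$ is the Fourier multiplier with symbol $(1+|\xi|^2)^{s/2}$, it commutes with all other Fourier multipliers. In particular it commutes with the cube cutoffs $\eta(D-n)$ defining the Wiener randomization \eqref{eq:15}, so
\begin{equation*}
\psi^\omega \;=\; \sum_{n \in \mathbb{Z}^d} g_n(\omega)\, \eta(D-n) \langle \nabla \rangle^s \phi \;=\; \langle \nabla \rangle^s \phi^\omega,
\end{equation*}
and it likewise commutes with $S(t) = e^{it\Delta}$, giving $\langle \nabla \rangle^s S(t)\phi^\omega = S(t)\psi^\omega$.

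Applying the Corollary with $\psi$ replacing $\phi$ then yields
\begin{equation*}
P\!\left(\|\langle \nabla \rangle^s S(t)\phi^\omega\|_{L^q_t L^r_x([0,T]\times\mathbb{R}^d)} > T^\theta R\right) \le c_1 e^{-c_2 R^2/T^{2/q-2\theta}\|\psi\|_{L^2}^2} \le c_1 e^{-c_2 R^2/\|\phi\|^2_{H^s}},
\end{equation*}
where the last inequality uses $T^{2/q-2\theta} \le 1$ for $T \le 1$ (recall $\theta < 1/q$, so the exponent $2/q - 2\theta$ is positive), possibly after adjusting $c_2$. This matches both displayed bounds in the statement.

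There is essentially no obstacle to this argument: the entire content of the proof is the verification that $\langle \nabla \rangle^s$ may be pulled past the randomization map and past the propagator. The only minor subtlety is confirming that the hypothesis $\phi \in H^s$ produces an $L^2$ function to which the preceding Corollary actually applies, which is immediate.
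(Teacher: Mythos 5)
Your proposal is correct and is essentially the paper's own argument: the paper obtains Theorem \ref{zbound} from the preceding Corollary precisely by ``placing derivatives inside,'' i.e.\ using that $\langle \nabla \rangle^{s}$ commutes with $S(t)$ and with the randomization map, so the Corollary applied to $\langle \nabla \rangle^{s}\phi$ (whose $L^2$ norm is $\|\phi\|_{H^s}$) gives the stated bound. Your additional remark that the second inequality needs $T^{2/q-2\theta}\le 1$ (i.e.\ $T\le 1$, consistent with the paper's standing assumption $T\ll 1$) is a fair and accurate clarification.
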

This bound will be crucial in the proof of local well-posedness. This gives us as much integrability as we want in bounding a linear solution, which means the linear solution is bounded in subcritical norms, allowing us to treat local well-posedness like a subcritical problem.

\begin{lemma}  \label{thm2.3}
Given $\phi \in H^s$ with randomization $\phi^{\omega}$, for all $R>0$ there exist positive constants $c_1, c_2$ such that: 
\begin{equation}
\label{eq:8} P\left ( \|\phi^{\omega}\|_{L^p(\mathbb{R}^d)} > R \right ) \le c_1e^{-c_2R^2/\|\phi\|^2_{L^2}}. \\
\end{equation} 
\end{lemma}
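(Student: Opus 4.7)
The plan is to adapt the proof of Lemma \ref{thm2.1}, replacing the $H^s$ norm by the $L^p$ norm. The new ingredient is Bernstein's inequality, which allows one to trade the $L^p_x$ norm of a frequency-localized piece $\eta(D-n)\phi$ for its $L^2_x$ norm at the cost of a uniform constant. The strategy is then the same: bound the $q$-th moment of $\|\phi^\omega\|_{L^p_x}$ via a sub-Gaussian estimate on sums of independent random variables, then apply Markov's inequality with $q$ optimized at the end.

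Restricting to $q \geq \max(p,2)$ so that both upcoming applications of Minkowski's integral inequality point the right way, I would first swap the order of the $L^p_x$ and $L^q_\omega$ norms:
\begin{equation*}
\mathbb{E}\bigl[\|\phi^\omega\|_{L^p_x}^q\bigr]^{1/q} = \bigl\|\|\phi^\omega\|_{L^p_x}\bigr\|_{L^q_\omega} \leq \bigl\|\|\phi^\omega\|_{L^q_\omega}\bigr\|_{L^p_x}.
\end{equation*}
The same sub-Gaussian bound used in \eqref{22}, applied pointwise in $x$, then gives
\begin{equation*}
\|\phi^\omega(x)\|_{L^q_\omega} \leq C\sqrt{q}\Bigl(\sum_{n \in \mathbb{Z}^d} |\eta(D-n)\phi(x)|^2\Bigr)^{1/2}.
\end{equation*}
A second Minkowski application (valid since $p \geq 2$) moves the $L^p_x$ norm inside the $\ell^2_n$ sum, and Bernstein's inequality applied to each unit-cube-localized piece yields $\|\eta(D-n)\phi\|_{L^p_x} \lesssim \|\eta(D-n)\phi\|_{L^2_x}$ with a constant independent of $n$. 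Plancherel together with the almost-orthogonality of the cube-localized pieces then collapses the resulting $\ell^2_n$ sum into $\|\phi\|_{L^2}$, yielding $\mathbb{E}[\|\phi^\omega\|_{L^p_x}^q] \leq (C\sqrt{q}\|\phi\|_{L^2})^q$.

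Markov's inequality converts this into $P(\|\phi^\omega\|_{L^p_x} > R) \leq (C\sqrt{q}\|\phi\|_{L^2}/R)^q$, and choosing $q = (R/(Ce\|\phi\|_{L^2}))^2$ produces the desired bound $e^{-c_2 R^2/\|\phi\|_{L^2}^2}$; the small-$R$ regime where the optimal $q$ falls below $\max(p,2)$ is handled by absorbing the trivial bound $P \leq 1$ into a sufficiently large $c_1$, exactly as in the two-case analysis concluding Lemma \ref{thm2.1}. The only real subtlety is checking that the constant in Bernstein's inequality can be taken uniform in $n \in \mathbb{Z}^d$, which holds because all the Fourier cubes are unit translates of one another; beyond this, the argument is a routine adaptation of the $H^s$ proof already in place.
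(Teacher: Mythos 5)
Your proof is correct and follows essentially the same route the paper takes (via the reference \cite{Pocovnicu}): the moment-plus-Markov scheme of Lemma \ref{thm2.1} — Minkowski, the sub-Gaussian bound on $\sum_n g_n c_n$ applied pointwise in $x$, Minkowski again, then optimization in $q$ with the small-$R$ case absorbed into $c_1$ — with the unit-scale Bernstein inequality on each cube-localized piece $\eta(D-n)\phi$ (uniform in $n$ by modulation) supplying the $L^p \lesssim L^2$ step, which is exactly the key ingredient the paper highlights. No gaps to report.
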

\begin{proof}
The proofs can be found in \cite{Pocovnicu}. They utilize the same basic argument as above, with some extra steps. Each proof exploits an improved Bernstein's inequality that results from the Wiener randomization. Note that $g_n(\omega)\eta(D-n)\phi$ has Fourier transform supported on the unit cube centered at $n$. Therefore $e^{inx} g_n(\omega)\eta(D-n)\phi$ has Fourier transform supported on the unit cube centered at the origin. Bernstein's inequality implies that 
\begin{equation}
\|e^{inx} g_n(\omega)\eta(D-n)\phi\|_{L^p} \lesssim \|e^{inx} g_n(\omega)\eta(D-n)\phi\|_{L^2} \\
\end{equation}
with no loss of regularity, since multiplying by $e^{inx}$ does not affect the $L^p$ norm, so we obtain the bound $\|g_n(\omega)\eta(D-n)\phi\|_{L^p} \lesssim \|g_n(\omega)\eta(D-n)\phi\|_{L^2}$. This is the key ingredient in the proof that allows one to bound the higher $L^p$ norm of $\phi^{\omega}$ with high probability while only assuming that $\phi \in L^2$.
\end{proof}

\section{Littlewood Paley theory and Function Spaces}  \label{mathrefs}

\subsection{Littlewood Paley Theory and Dyadic Decompositions}
In the fixed point proof we will take the linear and non-linear parts of our solution and dyadically decompose each into a sum of Littlewood Paley projections. Given a smooth bump function $\psi$ such that $\psi(\xi) = 1$ for $|\xi| \le 1 $ and $\psi(\xi) = 0$ for $|\xi| \ge 2$ we have the following definition from the Littlewood Paley theory:
\begin{definition}
Given dyadic $N$ and a function $f \in L^2$ we define its projection $P_{\le N}f$ to be the Fourier multiplier such that $\widehat{P_{\le N}f}(\xi )= \psi(\frac{\xi}{N})\widehat{f}(\xi)$.
\end{definition}

Of course the definition applies to a much wider range of distributions, but in this paper we need only consider functions in $L^2$ or $H^s$ for some $s >0$.

Note that $\widehat{P_{\le N}f}$ is supported on the set $|\xi| \le 2N$. Now we define the projection $P_N$ that localizes to frequencies in the interval $[N/2, 2N]$.
\begin{definition}
We define $P_{1} = P_{\le 1}$ and for dyadic $N>1$, $P_{N}f = P_{ \le N}f - P_{\le N/2}f$.
\end{definition}

This defines the projection $P_Nf$ with frequencies between $N/2$ and $2N$. Also we have $\sum_{N} P_Nf = f$, so this is indeed a decomposition.

The above info and other results on Littlewood-Paley theory can be found in the appendix of \cite{Dispersive}.

\subsection{Strichartz Spaces}  \label{mathrefs}
In this and the following section we introduce the function spaces needed to prove well-posedness. We start with the standard Strichartz spaces: $S^{s}(I \times \mathbb{R}^d)$ and $N^s(I \times \mathbb{R}^d)$: Let $q,r$ be a Schr\"odinger-admissible pair. Given an interval $I = [t_0,t_1]$ we define $S^s(I \times \mathbb{R}^d)$ to be the set of measurable functions bounded in the following norm:
\begin{equation*}
\begin{split}
\|u\|_{S^{s}(I \times \mathbb{R}^d)} &= \sup_{(q,r) - \text{admissible}} \|\langle \nabla \rangle^s u\|_{L^qL^r(I \times \mathbb{R}^d)}.\\
\end{split}
\end{equation*}
We also define $N^{-s}(I \times \mathbb{R}^d)$ to be the dual space of $S^s(I \times \mathbb{R}^d)$, which satisfies the bound:

\begin{equation*}
\begin{split}
\|u\|_{N^{s}(I \times \mathbb{R}^d)} &\le \inf_{(q,r) - \text{admissible}} \|\langle \nabla \rangle^s u\|_{L^{q'}L^{r'}(I \times \mathbb{R}^d)}.\\
\end{split}
\end{equation*}

The key relation between the Strichartz norms is the Strichartz estimate for solutions to the non-linear Schr\"odinger equation. Suppose $u$ is a solution to $iu_t + \Delta u = F$, then 
\begin{equation} \label{300}
\|u\|_{S^s([t_0, t_1] \times \mathbb{R}^d)} \lesssim \|u(t_0)\|_{H^s(\mathbb{R}^d)} + \|F\|_{N^s([t_0, t_1] \times \mathbb{R}^d)}.
\end{equation}

\subsection{$U^p$ and $V^p$ spaces}
Now it turns out we will want to use a norm that measures how close a function is to a linear solution to the Schr\"odinger equation. We start by defining a $U^p$ atom, and then the $U^p$ and $V^p$ spaces. 
Suppose $ 1 \le p < \infty$ and $- \infty < t_0 < t_1 < \ldots, < t_n \le \infty$ is a partition of the real line. We will denote the characteristic function of the $k$th interval of this partition by $\chi_{[t_{k-1},t_k)}$. 

\begin{definition} 
A $U^p$ atom is a step function into some Sobolev space $a(t) : \mathbb{R} \rightarrow H^s(\mathbb{R}^d)$ of the form 
\begin{equation}
a = \sum_{k=1}^{n} \phi_{k}\chi_{[t_{k-1},t_k)}
\end{equation}

where $\displaystyle \sum_{k=1}^{n} \|\phi_{k}\|^p_{H^s(\mathbb{R}^d)} = 1$.
\end{definition}
The definition applies to any Hilbert space $H$, but we will only need it for Sobolev spaces in this paper. 
\begin{definition} The space $U^p(\mathbb{R};H^s)$ is the set of measurable functions bonuded in the associated norm:
\begin{equation}
\|u\|_{U^p(\mathbb{R};H^s)} = \inf_{U^p \text{ atoms } a_j}  \{ \sum_{j} |\lambda_j| : u = \sum_{j} \lambda_ja_j\} .
\end{equation}
\end{definition}
For the $V^p$ spaces we continue to partition the real line, and take our norm to be the \newline $p$-variation of the given function.
\begin{definition}
The space $V^p(\mathbb{R};H^s)$ is the set of functions bounded under the $V^p$ norm:
\begin{equation}
\|u\|_{V^p(\mathbb{R};H^s)} = \sup_{ \text{partitions } t_k} \left( \sum_{k=1}^{n} \|u(t_k)-u(t_{k-1})\|_{H^s(\mathbb{R}^d)}^p . \right)^{1/p} 
\end{equation}
\end{definition}  

In addition, given an interval $I$, the norms $\|u\|_{U^p(I;H^s)}, \|u\|_{V^p(I;H^s)}$ and any of the following norms are defined as the restriction norms, for example:
\begin{equation}
\|u\|_{U^p(I;H^s)} = \inf_{w(t)=u(t), t \in I, w(\infty) = 0 = w(-\infty)} \|w\|_{U^p(\mathbb{R};H^s)} .
\end{equation}

Now we want to create a norm that measures how close our function is to a linear solution to the Schr\"odinger equation, much like in the definition of the $X^{s,b}$ spaces. If $u$ is a linear solution then $S(-t)u$ is a function that is constant in time with $\|S(-t)u\|_{U^2(I;H^s)}$ and $\|S(-t)u\|_{V^2(I;H^s)}$ norms bounded by $\|u\|_{H^s}$.
We define the $U^p_{\Delta}H^s, V^p_{\Delta}H^s$ norms as 
\begin{equation*}
\begin{split}
\|u\|_{U^p_{\Delta}H^s(\mathbb{R};H^s)} &= \|S(-t)u\|_{U^p(\mathbb{R};H^s)} \\
\|u\|_{V^p_{\Delta}H^s(\mathbb{R};H^s)} &= \|S(-t)u\|_{V^p(\mathbb{R};H^s)} \\
\end{split}
\end{equation*}
and the spaces $U^p_{\Delta}H^s, V^p_{\Delta}H^s$ are defined as the set of measurable functions $u: \mathbb{R} \rightarrow H^s(\mathbb{R}^d)$ bounded in the $U^p_{\Delta}H^s$ and $V^p_{\Delta}H^s$ norms respectively. 
These are useful spaces, however, in our proof we will rely on dyadic decomposition and will need to apply these norms at specific frequencies, so it is more useful to do computations in a slightly different norm adapted to dyadic decompositions.

\begin{definition}
We define the $X^s$ and $Y^s$ norms, and associated spaces, as follows:
\begin{equation}
\begin{split}
\|u\|_{X^s(\mathbb{R})} &= \left (  \sum_{N} N^{2s}\|P_N u\|^2_{U^2_{\Delta}L^2}   \right)^{\frac{1}{2}}\\
\|u\|_{Y^s(\mathbb{R})} &= \left (  \sum_{N} N^{2s}\|P_N u\|^2_{V^2_{\Delta}L^2}   \right)^{\frac{1}{2}} .\\
\end{split}
\end{equation}
\end{definition}
Note that these norms are a little stronger than those above. They bound the closeness of the function $u$ to a solution to the linear equation at each frequency, not just generally. Note that we immediately have the embedding $X^s \hookrightarrow Y^s$ as well as the bound
$\|S(t)\phi\|_{X^s(\mathbb{R};H^s)} \le \|\phi\|_{H^s(\mathbb{R}^d)}$. This bound means that these spaces are well suited to studying the linear problem. 

In addition we define the following norm for the non-homogeneous term that will allow us to exploit duality:
\begin{equation}
\|F\|_{M^s(I)} = \left \| \int_{t_0}^{t} S(t-t')F(t')dt'  \right\|_{X^s(I)} .
\end{equation}

This is equivalent to the dual norm of $Y^s$, and we have the bound
\begin{equation}
\|F\|_{M^s(I)} \le \sup_{\|v\|_{Y^s(I)}=1} \int_{I}\int_{\mathbb{R}^d} F(t,x)v(t,x)dxdt
\end{equation} as Lemma 3.5 in \cite{Pocovnicu2}. This is equivalent to 
\begin{equation} \label{eqn:39}
\|F\|_{M^s(I)} \le \sup_{\|v\|_{Y^0(I)}=1} \int_{I}\int_{\mathbb{R}^d} \langle \nabla\rangle^{s} F(t,x)v(x,t)dxdt .
\end{equation} 

In addition we have a bound analogous to the Strichartz estimate (\ref{300}) for the $M^s$ norm. Suppose $u(t,x)$ is a solution to equation the Cauchy problem
\begin{equation}
\begin{split}
iu_t + \Delta u &= F \\
u|_{t=0} &= u(0) \\
\end{split}
\end{equation}

on the interval $I$. Then we have the bound
\begin{equation}
\|u\|_{X^s(I)} \lesssim \|u(0)\|_{H^s(\mathbb{R}^d)} + \|F\|_{M^s(I)} .
\end{equation}

\section{Strichartz Estimates}  \label{mathrefs}

\begin{lemma}  \label{Y0}
\begin{enumerate}

Let $q,r$ be a Schr\"odinger-admissible pair. 
\item{}
Given an interval $I$, for any $u \in Y^0(I)$ we have:
\begin{equation}
\|u\|_{L^q_tL^r_x(I \times \mathbb{R}^d)} \lesssim \|u\|_{Y^0(I)} .
\end{equation}
\item{}
Given an interval $I$ and $p \ge \frac{2(d +2)}{d}$, for any $u \in {Y}^{d/2 - (d+2)/p}(I)$ we have: 
\begin{equation}
\begin{split}
\|u\|_{L^p_tL^p_x(I \times \mathbb{R}^d)} &\lesssim \left \| |\nabla|^{d/2 - (d+2)/p}u \right \|_{Y^0(I)} \\
&\lesssim  \left \| u \right \|_{Y^{d/2 - (d+2)/p}(I)} .\\
\end{split}
\end{equation}
\end{enumerate}
\end{lemma}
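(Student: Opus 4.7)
The plan is to upgrade the classical Strichartz estimate (\ref{eq:5}) for linear solutions $S(t)\phi$ to arbitrary functions in $V^2_{\Delta}L^2$ at a fixed dyadic frequency, and then assemble the pieces via the Littlewood--Paley square function estimate together with Minkowski's inequality.

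For part (1), my first step is the transference principle $\|v\|_{L^q_tL^r_x(I\times\mathbb{R}^d)} \lesssim \|v\|_{U^q_{\Delta}L^2}$ for any Schr\"odinger-admissible $(q,r)$. For a $U^q$-atom $a(t) = \sum_k \chi_{[t_{k-1},t_k)}(t)\,S(t)\phi_k$ with $\sum_k\|\phi_k\|_{L^2}^q = 1$, the classical Strichartz estimate gives
\begin{equation*}
\|a\|_{L^q_tL^r_x}^q \le \sum_k\|S(t)\phi_k\|_{L^q_tL^r_x}^q \lesssim \sum_k\|\phi_k\|_{L^2}^q = 1,
\end{equation*}
and this extends to all of $U^q_{\Delta}L^2$ by atomic decomposition. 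Composing with the standard embedding $V^2_{\Delta}L^2 \hookrightarrow U^q_{\Delta}L^2$ for $q>2$ from the Hadac--Herr--Koch framework yields $\|P_N u\|_{L^q_tL^r_x} \lesssim \|P_N u\|_{V^2_{\Delta}L^2}$ at each dyadic frequency $N$; the endpoint $q=2$ (for $d\ge3$) I would handle separately via the Keel--Tao endpoint Strichartz estimate in $U^2_{\Delta}L^2$, as already set up in \cite{Pocovnicu2}. To assemble the dyadic pieces, the Littlewood--Paley square function estimate (in $x$, applied pointwise in $t$) combined with Minkowski's inequality (valid since $q,r\ge2$) gives
\begin{equation*}
\|u\|_{L^q_tL^r_x} \lesssim \Bigl\|\Bigl(\sum_N|P_N u|^2\Bigr)^{1/2}\Bigr\|_{L^q_tL^r_x} \le \Bigl(\sum_N\|P_N u\|_{L^q_tL^r_x}^2\Bigr)^{1/2} \lesssim \|u\|_{Y^0(I)}.
\end{equation*}

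For part (2), I would pick the Schr\"odinger-admissible pair $(p,r)$ with $\tfrac{1}{r} = \tfrac{1}{2} - \tfrac{2}{dp}$; the hypothesis $p\ge \tfrac{2(d+2)}{d}$ ensures $2\le r\le p$. Sobolev embedding in the spatial variable then gives $\|f\|_{L^p_x}\lesssim \||\nabla|^{\alpha}f\|_{L^r_x}$ with $\alpha = d\bigl(\tfrac{1}{r}-\tfrac{1}{p}\bigr) = \tfrac{d}{2} - \tfrac{d+2}{p}$. Applying this pointwise in $t$ and invoking part (1) on $|\nabla|^{\alpha}u$ gives $\|u\|_{L^p_tL^p_x} \lesssim \||\nabla|^{\alpha}u\|_{L^p_tL^r_x} \lesssim \||\nabla|^{\alpha}u\|_{Y^0(I)}$. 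The final inequality $\||\nabla|^{\alpha}u\|_{Y^0}\lesssim \|u\|_{Y^{\alpha}}$ is immediate from the definitions, since $|\nabla|^{\alpha}$ commutes with both $S(t)$ and $P_N$ and satisfies $\||\nabla|^{\alpha}P_N f\|_{L^2}\lesssim N^{\alpha}\|P_N f\|_{L^2}$. The main obstacle in the whole argument is the endpoint $q=2$ in part (1), where $V^2\not\hookrightarrow U^2$, forcing one to either interpolate with an off-endpoint estimate or invoke the Keel--Tao endpoint directly in $U^2_{\Delta}L^2$; everything else is a clean assembly of standard tools from the $U^p$/$V^p$ framework, namely atomic Strichartz transference, the square function estimate, and Sobolev embedding.
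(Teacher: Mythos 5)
Your proposal is correct in substance, and for part (2) it is exactly the paper's argument: choose the admissible pair $(p,r)$ with $\frac{2}{p}+\frac{d}{r}=\frac{d}{2}$, apply Sobolev embedding in $x$ with $\alpha=d(\frac1r-\frac1p)=\frac d2-\frac{d+2}{p}$ pointwise in $t$, then invoke part (1); the final step $\||\nabla|^{\alpha}u\|_{Y^0}\lesssim\|u\|_{Y^{\alpha}}$ is the same bookkeeping. The difference is in part (1): the paper gives no proof and simply cites \cite{Pocovnicu2}, whereas you reconstruct the standard transference argument (Strichartz on $U^q_{\Delta}$ atoms, atomic decomposition, the embedding $V^2_{\Delta}\hookrightarrow U^q_{\Delta}$ for $q>2$, then Littlewood--Paley square function plus Minkowski to pass to $Y^0$). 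That is a legitimate and more self-contained route, and the square-function assembly step is valid since $q,r\ge 2$ for admissible pairs.

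One caveat: your treatment of the endpoint $(q,r)=(2,\frac{2d}{d-2})$ does not actually close that case. The $Y^0$ norm only controls the $V^2_{\Delta}L^2$ norms of the dyadic pieces, and since $V^2\not\hookrightarrow U^2$, the Keel--Tao endpoint estimate in $U^2_{\Delta}L^2$ cannot be transferred to a $Y^0$ bound; nor can one reach an endpoint by interpolating off-endpoint estimates. The correct reading is that part (1), as in the lemma of \cite{Pocovnicu2} it is quoted from, is a non-endpoint statement ($q>2$), and this suffices for every use in the paper: part (2) itself has $q=p\ge\frac{2(d+2)}{d}>2$, and all pairs appearing in the well-posedness argument are non-endpoint. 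With that restriction understood, your argument goes through as written; without it, the endpoint claim would be a genuine gap, but one already implicit in the paper's own statement rather than introduced by you.
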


\begin{proof}
The proof of the first is in \cite{Pocovnicu2}

To prove the second note that for $\frac{1}{p} = \frac{1}{r} - \frac{k}{d}$, Sobolev embedding implies that

\begin{equation}
\|u\|_{L^p(\mathbb{R}^d)} \lesssim \left \| |\nabla|^{k}u \right \|_{L^r(\mathbb{R}^d)}.
\end{equation}
Then taking the $L^p_t(I)$ norm of both sides we have
\begin{equation}
\|u\|_{L^p_{t,x}(I \times \mathbb{R}^d)} \lesssim \left \| |\nabla|^{k}u \right \|_{L^p_tL^r_x(I \times \mathbb{R}^d)}.
\end{equation}
Then by part 1, we have for $\frac{2}{p} + \frac{d}{r} = \frac{d}{2}$
\begin{equation}
\left\| |\nabla|^{k}u \right\|_{L^p_tL^r_x(I \times \mathbb{R}^d)} \lesssim \left \| |\nabla|^{k}u \right \|_{Y^0(I)} .
\end{equation}
This proves the desired inequality in $\mathbb{R}^d$ with exponents that satisfy
$\frac{2}{p}+\frac{d}{r} = \frac{d}{2}$ and $\frac{1}{p} = \frac{1}{r} - \frac{k}{d}$. Substituting we get $k = \frac{d}{2} - \frac{d+2}{p}$.
\end{proof}

By selecting $q=r=\frac{2(d+2)}{d}$ and $p = 2(d+2)$, we obtain the following corollaries:
\begin{corollary}
For all $u \in {Y}^0(I)$ one has:
\begin{equation}
\|u\|_{L^{\frac{2(d+2)}{d}}_{t,x}(I \times \mathbb{R}^d)} \lesssim \|u\|_{{Y}^0(I)} .
\end{equation}
\end{corollary}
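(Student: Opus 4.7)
The plan is to invoke part (1) of Lemma \ref{Y0} with the specific admissible pair $(q,r)=\bigl(\tfrac{2(d+2)}{d},\tfrac{2(d+2)}{d}\bigr)$. The only thing to check is that this diagonal pair is Schr\"odinger-admissible in the sense of \eqref{eq:3}.

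First I would verify admissibility directly: with $q=r=\tfrac{2(d+2)}{d}$ one computes
\begin{equation*}
\frac{2}{q}+\frac{d}{r}=\frac{2d}{2(d+2)}+\frac{d^{2}}{2(d+2)}=\frac{d(d+2)}{2(d+2)}=\frac{d}{2},
\end{equation*}
so the identity \eqref{eq:3} holds. The constraints $2\le q,r\le\infty$ are satisfied since $\tfrac{2(d+2)}{d}=2+\tfrac{4}{d}\ge 2$, and the forbidden endpoint $(2,\infty,2)$ is avoided because neither $q$ nor $r$ is $\infty$. Hence $(q,r)$ is Schr\"odinger-admissible for every $d\ge 1$, and in particular for our $d\ge 3$.

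Next I would apply part (1) of Lemma \ref{Y0} with this pair, which gives
\begin{equation*}
\|u\|_{L^{q}_{t}L^{r}_{x}(I\times\mathbb{R}^{d})}\lesssim \|u\|_{Y^{0}(I)}.
\end{equation*}
Since $q=r$, the mixed norm collapses to $\|u\|_{L^{2(d+2)/d}_{t,x}(I\times\mathbb{R}^{d})}$, yielding exactly the stated inequality.

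There is no real obstacle here: the corollary is a specialization of the first part of Lemma \ref{Y0}. The only bookkeeping step is confirming admissibility of the diagonal exponent, after which the conclusion is immediate. (I would also remark, as a sanity check with the second part of Lemma \ref{Y0}, that the threshold $p\ge\tfrac{2(d+2)}{d}$ is saturated here by $p=\tfrac{2(d+2)}{d}$, in which case $\tfrac{d}{2}-\tfrac{d+2}{p}=0$ and the second assertion likewise reduces to the present corollary, providing a consistency check.)
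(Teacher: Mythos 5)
Your proposal is correct and is essentially the paper's own argument: the corollary is obtained by applying part (1) of Lemma \ref{Y0} with the diagonal Schr\"odinger-admissible pair $q=r=\tfrac{2(d+2)}{d}$, which the paper states without further comment. Your explicit verification of admissibility and the consistency check against part (2) are fine but add nothing beyond the paper's route.
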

\begin{corollary}
For all $u \in{Y}^{\frac{d-1}{2}}(I)$ one has:
\begin{equation}
\|u\|_{L^{2(d+2)}_{t,x}(I \times \mathbb{R}^d)} \lesssim \left \| |\nabla|^{\frac{d-1}{2}} u \right \|_{{Y}^{0}(I)} \lesssim \left \| u \right \|_{{Y}^{\frac{d-1}{2}}(I)} .
\end{equation}
\end{corollary}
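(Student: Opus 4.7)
The plan is to derive this corollary as the specialization of part (2) of Lemma \ref{Y0} to the exponent $p = 2(d+2)$. First I would verify the arithmetic: part (2) requires $p \ge \tfrac{2(d+2)}{d}$, and $2(d+2) \ge \tfrac{2(d+2)}{d}$ trivially for $d \ge 1$, so the choice is admissible. Substituting $p = 2(d+2)$ into the Sobolev exponent $\tfrac{d}{2} - \tfrac{d+2}{p}$ gives $\tfrac{d}{2} - \tfrac{1}{2} = \tfrac{d-1}{2}$, which matches the regularity in the statement. Therefore part (2) of Lemma \ref{Y0} directly yields
\[
\|u\|_{L^{2(d+2)}_{t,x}(I\times\mathbb{R}^d)} \lesssim \bigl\| |\nabla|^{\frac{d-1}{2}} u \bigr\|_{Y^0(I)},
\]
which is the first of the two inequalities claimed.

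For the second inequality I would unpack the definition of $Y^s$. Fourier multipliers commute, so $P_N\bigl(|\nabla|^{\frac{d-1}{2}} u\bigr) = |\nabla|^{\frac{d-1}{2}} P_N u$, and $|\nabla|^{\frac{d-1}{2}}$ also commutes with the linear Schr\"odinger propagator $S(t)$ appearing in the definition of $V^2_\Delta L^2$. Since $P_N u$ is frequency-localized to $|\xi|\sim N$, the multiplier $|\nabla|^{\frac{d-1}{2}}$ acts there essentially as multiplication by $N^{(d-1)/2}$; a standard Bernstein-type argument applied after pulling $S(-t)$ through gives
\[
\bigl\| |\nabla|^{\frac{d-1}{2}} P_N u \bigr\|_{V^2_\Delta L^2} \lesssim N^{\frac{d-1}{2}} \| P_N u \|_{V^2_\Delta L^2}.
\]
Squaring, summing over dyadic $N$, and comparing with the definition
\[
\|u\|_{Y^{(d-1)/2}(I)}^2 = \sum_{N} N^{d-1} \| P_N u \|_{V^2_\Delta L^2}^2
\]
gives the second inequality, completing the chain.

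There is no real obstacle here; the corollary is a bookkeeping check on Strichartz exponents followed by a routine Littlewood--Paley commutation. The one place to be careful is the second inequality, where one must justify that passing $|\nabla|^{(d-1)/2}$ through the Littlewood--Paley projector and through $S(t)$ produces the expected gain of $N^{(d-1)/2}$ on each dyadic block, but this is standard once the definitions are in hand.
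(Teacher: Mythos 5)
Your proposal is correct and follows the paper's own route: the paper obtains this corollary precisely by specializing part (2) of Lemma \ref{Y0} to $p = 2(d+2)$, for which $\tfrac{d}{2} - \tfrac{d+2}{p} = \tfrac{d-1}{2}$. Your additional verification of the second inequality via frequency localization is a sound (if not strictly necessary) elaboration of what the lemma already asserts.
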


Lastly, the following is a bilinear projection lemma that gives an $L^2$ bound on the bilinear $L^2$ norm of projections at different frequencies from Bourgain  in \cite{Bourgain}, \cite{Ozawa}. In addition there is a version adapted to the Schr\"odinger equation from \cite{Visan}. 
\begin{lemma}
For dyadic $N_1 \le N_2$ and $\phi_1, \phi_2 \in L^2$ we have
\begin{equation}
\|P_{N_1}S(t)\phi_1P_{N_2}S(t)\phi_2\|_{L^2(I \times \mathbb{R}^d)} \lesssim N_1^{\frac{d-1}{2}}N_2^{\frac{-1}{2}}\|P_{N_1}\phi_1\|_{L^2(\mathbb{R}^d)}\|P_{N_2}\phi_2\|_{L^2(\mathbb{R}^d)} .
\end{equation}
\end{lemma}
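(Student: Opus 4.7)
The plan is to use the classical Fourier-analytic approach of Bourgain and Ozawa: apply Plancherel in spacetime, reduce to a convolution on the paraboloid, and exploit the transversality of two paraboloid pieces at well-separated frequencies. First I would set $u_i(t,x) = S(t)\phi_i$ and note that the spacetime Fourier transform of $P_{N_i} u_i$ is the measure
\begin{equation*}
\widehat{P_{N_i}u_i}(\tau_i,\xi_i) = \mathbf{1}_{|\xi_i|\sim N_i}\,\widehat{\phi_i}(\xi_i)\,\delta(\tau_i + |\xi_i|^2).
\end{equation*}
By Plancherel, $\|P_{N_1}u_1 P_{N_2}u_2\|_{L^2(\mathbb{R}\times\mathbb{R}^d)}^2$ equals the $L^2$ norm of the convolution $\widehat{P_{N_1}u_1}\ast\widehat{P_{N_2}u_2}$, which evaluates at $(\tau,\xi)$ to an integral of $\widehat{\phi_1}(\xi_1)\widehat{\phi_2}(\xi-\xi_1)$ against $\delta(\tau+|\xi_1|^2+|\xi-\xi_1|^2)$ with the frequency support restrictions $|\xi_1|\sim N_1$, $|\xi-\xi_1|\sim N_2$.

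Next I would fix $(\tau,\xi)$ and apply Cauchy--Schwarz on the resulting surface. For fixed $\xi$, the constraint $|\xi_1|^2 + |\xi-\xi_1|^2 = -\tau$ cuts out a $(d-1)$-sphere in $\xi_1$-space (in the variable $\eta = 2\xi_1 - \xi$). The gradient of the defining function is $2(\xi_1-\xi_2)$, where $\xi_2 := \xi - \xi_1$, so the surface measure induced by $\delta(\tau+|\xi_1|^2+|\xi_2|^2)$ has density $|2(\xi_1-\xi_2)|^{-1}$. On the support we have $|\xi_2|\sim N_2 \ge N_1 \gtrsim |\xi_1|$, hence $|\xi_1-\xi_2|\gtrsim N_2$; meanwhile the portion of the sphere compatible with the restriction $|\xi_1|\lesssim N_1$ has $(d-1)$-dimensional measure $\lesssim N_1^{d-1}$. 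Cauchy--Schwarz on the surface then gives
\begin{equation*}
\bigl|\widehat{P_{N_1}u_1}\ast\widehat{P_{N_2}u_2}(\tau,\xi)\bigr|^{2} \lesssim \frac{N_1^{d-1}}{N_2}\int \frac{|\widehat{\phi_1}(\xi_1)|^2|\widehat{\phi_2}(\xi_2)|^2}{|\xi_1-\xi_2|}\,d\sigma(\xi_1).
\end{equation*}

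Finally, integrating in $(\tau,\xi)$ and reversing the change of variables $(\xi_1,\xi_2)\leftrightarrow(\xi,\tau)$ on the support (whose Jacobian is again $\sim|\xi_1-\xi_2|\gtrsim N_2$), the weight $|\xi_1-\xi_2|^{-1}\cdot|\xi_1-\xi_2|$ cancels and one is left with
\begin{equation*}
\|P_{N_1}u_1 P_{N_2}u_2\|_{L^2}^2 \lesssim \frac{N_1^{d-1}}{N_2}\,\|P_{N_1}\phi_1\|_{L^2}^2\,\|P_{N_2}\phi_2\|_{L^2}^2,
\end{equation*}
which after taking square roots is the claimed bound. The main obstacle I anticipate is making the transversality step rigorous: one must justify the surface-measure computation with the delta function, and in the borderline regime $N_1\sim N_2$ the lower bound $|\xi_1-\xi_2|\gtrsim N_2$ can fail in a small subset of the supports where $\xi_1\approx\xi_2$. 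This is handled by decomposing the annulus $|\xi_1|\sim N_1$ into finitely many angular caps of diameter comparable to $N_1$; on each pair of caps one either has genuine transversality $|\xi_1-\xi_2|\gtrsim N_2$ or the output frequency $\xi=\xi_1+\xi_2$ is localized to a small region, and orthogonality in the output variable allows one to sum the square contributions with no loss, recovering the stated estimate in full generality.
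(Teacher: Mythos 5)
The paper itself does not prove this lemma; it only cites Bourgain, Ozawa--Tzutsumi and Visan, and your sketch is exactly the classical argument from those references: Plancherel in spacetime, reduction to a weighted convolution of measures on the paraboloid, and Cauchy--Schwarz on the sphere $\{|2\xi_1-\xi|^2=-2\tau-|\xi|^2\}$ with the weight $|\nabla_{\xi_1}(\tau+|\xi_1|^2+|\xi-\xi_1|^2)|^{-1}=\tfrac12|\xi_1-\xi_2|^{-1}$. For genuinely separated frequencies ($N_1\ll N_2$, so that $|\xi_1-\xi_2|\gtrsim N_2$ on the support) your computation is correct, including the final co-area step converting $|\xi_1-\xi_2|^{-1}d\sigma(\xi_1)\,d\tau$ back into $d\xi_1$, which yields $\frac{N_1^{d-1}}{N_2}\|P_{N_1}\phi_1\|_{L^2}^2\|P_{N_2}\phi_2\|_{L^2}^2$; restricting from $\mathbb{R}\times\mathbb{R}^d$ to $I\times\mathbb{R}^d$ is then trivial.

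The one genuine gap is the borderline case $N_1\sim N_2$, which the statement includes. Your proposed repair --- caps of diameter comparable to $N_1$, and for near-diagonal cap pairs ``output localization plus orthogonality'' --- does not work as written: there are only $O(1)$ such caps, so orthogonality in the output variable buys at most a constant, and in any event it only controls the summation over cap pairs; it does not produce the required bound $\lesssim N_1^{(d-1)/2}N_2^{-1/2}\sim N_1^{(d-2)/2}$ for a single near-diagonal pair, which is precisely where $|\xi_1-\xi_2|$ degenerates. The case is easy to close, in either of two ways. (i) Observe that on the fixed-$(\tau,\xi)$ surface one has $|\xi_1-\xi_2|=|2\xi_1-\xi|=r$ identically, where $r=\sqrt{-2\tau-|\xi|^2}$ is the radius of the sphere, while the portion of the sphere compatible with $|\xi_1|\lesssim N_1$ has measure $\lesssim\min(r,N_1)^{d-1}$; hence the Cauchy--Schwarz factor is $\lesssim\min(r,N_1)^{d-1}/r\lesssim N_1^{d-1}/N_2$ uniformly in $r$ when $N_1\sim N_2$ (and automatically when $N_1\ll N_2$, since then $r\sim N_2$ on the support), so in fact no angular decomposition is needed at all. (ii) Alternatively, when $N_1\sim N_2$ the claimed bound is just $N_1^{(d-2)/2}$, which follows from H\"older in the form $\|P_{N_1}u_1P_{N_2}u_2\|_{L^2_{t,x}}\le\|P_{N_1}u_1\|_{L^2_tL^\infty_x}\|P_{N_2}u_2\|_{L^\infty_tL^2_x}$ together with Bernstein and the endpoint Strichartz estimate in $L^2_tL^{2d/(d-2)}_x$, available since $d\ge3$ in this paper. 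With either repair your argument gives the stated estimate.
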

\begin{corollary} \label{bilinear} For  $N_1 \le N_2$ and $u_1, u_2 \in Y^0(I)$ we have
\begin{equation}
\|P_{N_1}u_1P_{N_2}u_2\|_{L^2(I \times \mathbb{R}^d)} \lesssim N_1^{\frac{d-1}{2}-}N_2^{\frac{-1}{2}+}\|P_{N_1}u_1\|_{Y^0(I)}\|P_{N_2}u_2\|_{Y^0(I)} .
\end{equation}
\end{corollary}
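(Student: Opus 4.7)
The plan is to transfer the linear-evolution bilinear estimate of the preceding lemma up to $U^2_\Delta$ via atomic decomposition, and then to pass from $U^2_\Delta$ to $V^2_\Delta$ (which is what appears in $Y^0$) by an interpolation argument; the $\pm$ epsilon losses in the exponents $N_1^{(d-1)/2-}$ and $N_2^{-1/2+}$ are the fingerprint of this last step.

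First I would upgrade the bilinear lemma to $U^2_\Delta$. Writing $u_j = \sum_k \lambda_{j,k} a_{j,k}$ as an atomic decomposition in $U^2_\Delta L^2$ with $\sum_k |\lambda_{j,k}| \le 2 \|u_j\|_{U^2_\Delta L^2}$ and each atom of the form
\begin{equation*}
a_{j,k} = \sum_{\ell} S(t) \phi_{j,k,\ell}\,\chi_{[t_{\ell-1},t_\ell)}, \qquad \sum_{\ell} \|\phi_{j,k,\ell}\|_{L^2}^2 = 1,
\end{equation*}
I apply the lemma on each time slab and sum using Cauchy–Schwarz in $\ell$ (since the $L^2_{t,x}$ norm squared is additive across disjoint time intervals). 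The Littlewood–Paley projections commute with $S(t)$ and with the time cutoffs, so this produces
\begin{equation*}
\|P_{N_1} u_1 \cdot P_{N_2} u_2\|_{L^2(I \times \mathbb{R}^d)} \lesssim N_1^{\frac{d-1}{2}} N_2^{-\frac{1}{2}} \|P_{N_1} u_1\|_{U^2_\Delta L^2} \|P_{N_2} u_2\|_{U^2_\Delta L^2}.
\end{equation*}

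Next I would establish a second, cruder bilinear bound on $U^p_\Delta L^2$ for some fixed $p$ slightly larger than $2$. Using the $U^p_\Delta \hookrightarrow L^q_t L^r_x$ Strichartz embedding (which follows atom by atom from the linear Strichartz estimate \eqref{eq:5}), Hölder's inequality in spacetime, and a Bernstein loss of $N_1^\alpha N_2^\alpha$ for some $\alpha>0$ depending on the chosen Strichartz exponents, I obtain a bound of the same form but with exponents $N_1^{(d-1)/2+\alpha} N_2^{-1/2+\alpha}$ and $U^p_\Delta$ norms on the right.

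Finally, the transfer from $U^2_\Delta$ to $V^2_\Delta$ is the main obstacle, and I would invoke the standard interpolation lemma for the $U^p/V^p$ scale (Proposition 2.20 of Hadac–Herr–Koch, also used throughout \cite{Pocovnicu2}): if a bilinear operator $T$ satisfies $\|T(u_1,u_2)\|_Z \le C_0$ on $U^2_\Delta \times U^2_\Delta$ and $\|T(u_1,u_2)\|_Z \le C_1$ on $U^p_\Delta \times U^p_\Delta$ for some $p>2$, then for any $\epsilon>0$ one has $\|T(u_1,u_2)\|_Z \lesssim C_0^{1-\epsilon} C_1^{\epsilon}$ on $V^2_\Delta \times V^2_\Delta$. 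Applied to $T(u_1,u_2) = P_{N_1} u_1 \cdot P_{N_2} u_2$ with $Z = L^2_{t,x}$, the two bounds above produce exponents $N_1^{(d-1)/2 + \alpha\epsilon} N_2^{-1/2 + \alpha\epsilon}$ on the right, with $V^2_\Delta L^2$ norms, which after replacing $\alpha\epsilon$ by an arbitrary small parameter is exactly the claim once one unwinds $\|P_{N_j} u_j\|_{Y^0(I)} = \|P_{N_j} u_j\|_{V^2_\Delta L^2(I)}$. The two routine items to verify are the commutativity of $P_N$ with the $U^2_\Delta$/atomic structure (immediate from the Fourier multiplier definition) and that the crude $U^p_\Delta$ bound indeed beats the sharp one by at most a polynomial factor in $N_1,N_2$, so that the interpolation only costs $\epsilon$ in the exponents.
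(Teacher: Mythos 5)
Your proposal is correct and is essentially the argument behind the result: the paper itself offers no proof, simply citing Lemma 3.5 of \cite{Pocovnicu2}, and the proof there is exactly your scheme --- transfer the free-evolution bilinear estimate to $U^2_\Delta$ atom by atom (using additivity of $\|\cdot\|_{L^2_{t,x}}^2$ over time slabs), prove a cruder bound with $U^p_\Delta$ norms for some $p>2$ via Strichartz embedding, Bernstein and H\"older, and then pass to $V^2_\Delta$ (hence $Y^0$) by the Hadac--Herr--Koch interpolation, which is the source of the $\epsilon$-losses. The only cosmetic point you leave implicit is that your interpolated exponents $N_1^{\frac{d-1}{2}+}N_2^{-\frac12+}$ become the stated $N_1^{\frac{d-1}{2}-}N_2^{-\frac12+}$ by trading the small power from $N_1$ to $N_2$ using $N_1 \le N_2$.
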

\begin{proof}The proof is found in \cite{Pocovnicu2} as Lemma 3.5. 
\end{proof}
This will be a key ingredient in the proof of local well-posedness because it allows us to gain half a derivative from higher frequency terms. In addition we use the following three dimensional bilinear estimate that solely consists of Strichartz norms. 

\begin{theorem} \label{visanbound} For dyadic $N_1 \le N_2$ and any small $\delta > 0$ we have:  
\begin{equation}
\begin{split}
\|P_{N_1}u_1P_{N_2}u_2\|_{L^2(I \times \mathbb{R}^3)} &\lesssim N_1^{\frac{d-1}{2}-\delta}N_2^{\frac{-1}{2}+\delta}(\|P_{N_1}u_1(0)\|_{L^2(\mathbb{R}^3)} + \|(i\partial_t + \Delta)P_{N_2}u_2\|_{L^{3/2}L^{18/13}(I\times \mathbb{R}^3)})\\
& \times (\|P_{N_2}u_2(0)\|_{L^2(\mathbb{R}^3)}+\|(i\partial_t + \Delta)P_{N_2}u_2 \|_{L^{3/2}L^{18/13}(I\times \mathbb{R}^3)}) .\\
\end{split}
\end{equation}
\end{theorem}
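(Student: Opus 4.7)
The plan is to reduce Theorem \ref{visanbound} to the $Y^{0}$-version of the bilinear Strichartz estimate already recorded as Corollary \ref{bilinear}, and then dominate each $Y^{0}$ norm that appears on the right by the stated data-plus-dual-Strichartz quantity. Specializing Corollary \ref{bilinear} to $d=3$ already produces the sharp numerology $N_{1}^{1-\delta}N_{2}^{-1/2+\delta}$ demanded by the theorem, so the entire substantive step is to show that for $w=P_{N_{j}}u_{j}$ one has
\[
\|w\|_{Y^{0}(I)} \lesssim \|w(0)\|_{L^{2}(\mathbb{R}^{3})} + \|(i\partial_{t}+\Delta)w\|_{L^{3/2}_{t}L^{18/13}_{x}(I\times\mathbb{R}^{3})}.
\]

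First I would route the estimate through $X^{0}$: the Strichartz-type inequality $\|w\|_{X^{0}(I)}\lesssim \|w(0)\|_{L^{2}} + \|(i\partial_{t}+\Delta)w\|_{M^{0}(I)}$ stated at the end of Section 3, together with the trivial embedding $X^{0}\hookrightarrow Y^{0}$ noted after the definition of the $X^{s}$ and $Y^{s}$ norms, reduces matters to bounding the $M^{0}$ norm of the nonlinearity by $L^{3/2}L^{18/13}$. This is exactly where the duality characterization (\ref{eqn:39}) enters. Writing $G=(i\partial_{t}+\Delta)w$, (\ref{eqn:39}) gives
\[
\|G\|_{M^{0}(I)} \le \sup_{\|v\|_{Y^{0}(I)}=1}\int_{I}\int_{\mathbb{R}^{3}}G(t,x)\,v(t,x)\,dx\,dt.
\]
The pair $(q,r)=(3,18/5)$ is Schr\"odinger-admissible in $\mathbb{R}^{3}$ since $\frac{2}{3}+\frac{3}{18/5}=\frac{2}{3}+\frac{5}{6}=\frac{3}{2}$, with dual exponents $(q',r')=(3/2,18/13)$. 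Applying H\"older's inequality followed by Lemma \ref{Y0}(1) for this admissible pair yields
\[
\int_{I}\int_{\mathbb{R}^{3}}G\,v\,dx\,dt \le \|G\|_{L^{3/2}_{t}L^{18/13}_{x}}\|v\|_{L^{3}_{t}L^{18/5}_{x}} \lesssim \|G\|_{L^{3/2}_{t}L^{18/13}_{x}}\|v\|_{Y^{0}(I)},
\]
and taking the supremum produces the desired bound $\|G\|_{M^{0}(I)}\lesssim \|G\|_{L^{3/2}_{t}L^{18/13}_{x}(I\times\mathbb{R}^{3})}$.

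Feeding this frequency-localized $Y^{0}$-estimate back into Corollary \ref{bilinear} (once for $w=P_{N_{1}}u_{1}$ and once for $w=P_{N_{2}}u_{2}$) yields Theorem \ref{visanbound}. I do not expect a genuine obstacle: the entire argument is a composition of results already in place. The only real care needed is bookkeeping of the restriction-norm conventions for $X^{0}(I),Y^{0}(I),M^{0}(I)$ so that the Strichartz bound and the dual statement (\ref{eqn:39}) are invoked in mutually compatible forms, and tracking the Littlewood--Paley projectors carefully so that the sharp $N_{1}^{1-\delta}N_{2}^{-1/2+\delta}$ factor produced by Corollary \ref{bilinear} is not degraded when the $Y^{0}$ norms are expanded into data and nonlinearity.
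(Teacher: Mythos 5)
Your derivation is correct, but it is not the route the paper takes: the paper offers no proof of Theorem \ref{visanbound} at all, simply citing Lemma 2.5 of Visan's thesis \cite{Visan}, where the estimate is obtained by classical means (Duhamel expansion of each frequency-localized factor, Bourgain's bilinear refinement of Strichartz for free evolutions, and Strichartz/dual Strichartz estimates), with no $U^2$/$V^2$ machinery. You instead deduce it internally from the tools already quoted in Sections 3--4: Corollary \ref{bilinear} with $d=3$ gives the sharp factor $N_1^{1-\delta}N_2^{-1/2+\delta}$, and the chain $\|w\|_{Y^0(I)}\lesssim\|w\|_{X^0(I)}\lesssim\|w(0)\|_{L^2}+\|(i\partial_t+\Delta)w\|_{M^0(I)}\lesssim\|w(0)\|_{L^2}+\|(i\partial_t+\Delta)w\|_{L^{3/2}L^{18/13}}$ is legitimate, resting on the $X^s$ Strichartz bound at the end of Section 3, the duality bound (\ref{eqn:39}) with $s=0$, H\"older, and Lemma \ref{Y0}(1) for the admissible pair $(3,18/5)$ whose dual is $(3/2,18/13)$; all of these are stated in the paper, so no circularity arises. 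What each approach buys: yours makes the statement a corollary of the $Y^0$ bilinear estimate plus the standard transfer of dual Strichartz norms into $M^0$, keeping the argument self-contained in the paper's framework, while the citation keeps the estimate in exactly the form found in the energy-critical literature without the restriction-norm bookkeeping you rightly flag as the only delicate point. One further remark in your favor: your proof yields the first factor with $\|(i\partial_t+\Delta)P_{N_1}u_1\|_{L^{3/2}L^{18/13}}$, which is clearly the intended statement and the form actually used in Lemmas \ref{lemma62}--\ref{lemma64}; the theorem as printed repeats $P_{N_2}u_2$ in both parentheses, a typo that your derivation silently corrects.
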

\begin{proof}The proof is found in \cite{Visan} as Lemma 2.5. 
\end{proof}
This will be a key ingredient in the proof of Theorem 1.2 in section 6.

\section{Almost Sure Local Well-Posedness} \label{mathrefs}
We now begin the proof of Theorem \ref{thm1}. Given some $\phi \in H^s(\mathbb{R}^d)$  let $\phi^{\omega}$ be its Wiener randomization, and recall that $z(t) = S(t)\phi^{\omega}$ denotes the linear part of the NLS solution and $v(t)$ is the solution to equation (\ref{non}). 

Even though we do not have long term bounds on the $H^s$ norm of $v$, we know that $v(0)=0$. Exploiting our probabilistic bound on $z(t)$ in subcritical norms, we show that for $\rho \in (\frac{d-1}{2}, s+\frac{1}{2})$ the norm $\|v\|_{X^{\rho}((-T,T))}$ is bounded for  small enough time $T$. 

Our method will be a fixed point argument. We define 
\begin{equation} \label{eq:55}
\Gamma v(t) = \pm \int_{0}^t -iS(t-t')[|v+z|^4(v+z)](t')dt'
\end{equation}
and note that $v$ is a solution if and only if $\Gamma v = v$. We now prove the following proposition, which is the bulk of our fixed point argument. 
\begin{proposition} Assume $s$ and $\rho$ satisfies the bounds 
\begin{equation}
\frac{d}{2} > s + \frac{1}{2} > \rho > \frac{d-1}{2} .
\end{equation}
Given $\phi \in H^s(\mathbb{R}^d)$ with randomization $\phi^{\omega}$ there exists small $\theta>0$ such that for every $R>0$ and sufficiently small $T \ll1$, we have 
\begin{itemize}
\item{} $\|\Gamma v\|_{X^{\rho}} \lesssim T^{\theta}(\|v\|^5_{X^{\rho}([0,T))}+R^5)$ off a set of measure $c_1e^{-c_2R^2/\|\phi\|^2_{H^s}}$. 
\item{} $\|\Gamma v_1 - \Gamma v_2\|_{X^{\rho}([0,T))} \lesssim T^{\theta}(R^4+\|v_1\|^4_{X^{\rho}([0,T))}+ \|v_2\|^4_{X^{\rho}([0,T))})\|v_1-v_2\|_{X^{\rho}([0,T))}$ off a set of measure $c_1e^{-c_2R^2/\|\phi\|^2_{H^s}}$.
\end{itemize}
This stems from Theorem \ref{zbound}, which tells us that for $\theta < \frac{1}{q}$ we have
\begin{equation}
P(\|\langle \nabla \rangle^{s} z\|_{L^q_tL^r_x([0,T) \times \mathbb{R}^d)} \le T^{\theta}R) \ge 1-c_1e^{-c_2R^2/\|\phi\|^2_{H^{s}}},
\end{equation} 
which allows us to gain a factor of $T$.
\end{proposition}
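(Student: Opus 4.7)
The plan is to use the duality characterization $\|F\|_{M^\rho(I)} \lesssim \sup_{\|w\|_{Y^0(I)}=1} \iint \langle\nabla\rangle^\rho F \cdot w\, dx\, dt$ from \eqref{eqn:39}, combined with $\|\Gamma v\|_{X^\rho} \lesssim \||v+z|^4(v+z)\|_{M^\rho}$ which comes from the $X^s$ Strichartz-type bound for the Duhamel term. After expanding the nonlinearity as a finite sum of monomials of the form $v^{j_1}\bar v^{j_2} z^{j_3}\bar z^{j_4}$ with $j_1+j_2+j_3+j_4=5$, I would estimate each resulting sextilinear form $\iint \langle\nabla\rangle^\rho(\text{5 factors})\cdot w$ separately.

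For each such sextilinear form I would insert a Littlewood--Paley decomposition into all six factors, relabel the five nonlinearity factors so that $N_1 \le N_2 \le N_3 \le N_4 \le N_5$, and place $\langle\nabla\rangle^\rho$ on the top-frequency factor, producing a weight $N_5^\rho$. Fourier support forces the test-function frequency to satisfy $N_w \lesssim N_5$, so I can apply the bilinear Corollary \ref{bilinear} to the pair $(P_{N_w}w, P_{N_5}[\cdot])$ --- or, when $N_4 \sim N_5$, to $(P_{N_4}[\cdot], P_{N_5}[\cdot])$ --- to extract the gain $N_{\mathrm{low}}^{(d-1)/2-} N_5^{-1/2+}$. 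This is the decisive half-derivative that turns a formally critical expression into a subcritical one. The remaining three factors are put in $L^p_{t,x}$ via Lemma \ref{Y0}, using the Littlewood--Paley characterization $\|P_N v\|_{Y^0} \le N^{-\rho}\|v\|_{X^\rho}$ for $v$-factors, and for $z$-factors the probabilistic bound $\|\langle\nabla\rangle^{s}P_N z\|_{L^q L^r} \lesssim T^\theta R$ from Theorem \ref{zbound}, which holds off an exceptional set of probability at most $c_1 e^{-c_2 R^2/\|\phi\|_{H^s}^2}$.

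After Hölder, each monomial produces a dyadic sum whose net $N_5$-exponent is essentially $\rho - s - 1/2 +$ when the top factor is a $z$ and $-1/2+$ when it is a $v$; the hypothesis $\rho < s + 1/2$ is precisely what closes the worst $z$-at-top case, while $\rho > (d-1)/2$ controls the Sobolev exponents arising from the lower three factors and yields a convergent geometric series in $N_1, N_2, N_3$. The $T^\theta$ factor appears automatically from every $z$-slot by Theorem \ref{zbound}; in the purely deterministic monomial $v^5$ no probabilistic gain is available, and I would instead extract $T^\theta$ by Hölder in time, using the strict subcriticality $\rho > s_c = (d-1)/2$ to interpolate between two admissible Strichartz pairs and trade integrability for a power of $T$.

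The Lipschitz bound follows from the factorization $|v_1+z|^4(v_1+z) - |v_2+z|^4(v_2+z) = (v_1-v_2)\, Q(v_1,v_2,z) + \overline{(v_1-v_2)}\,\tilde Q(v_1,v_2,z)$ with $Q, \tilde Q$ polynomials of total degree four, running the same sextilinear scheme with $v_1-v_2$ in one slot. The main obstacle I anticipate is the uniform bookkeeping across the roughly thirty monomial sub-cases: in each, one has to verify that the bilinear half-derivative absorbs the $\rho-s$ gap at the top slot, that the low-frequency Strichartz exponents sum geometrically (this is where $\rho > (d-1)/2$ really bites), and that the factor $T^\theta$ is produced uniformly --- including in the all-$v$ monomial, where it must come from Hölder in time rather than from Theorem \ref{zbound}. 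Once these dyadic sums are tamed, the fixed-point argument closes immediately on a small ball in $X^\rho((-T,T))$.
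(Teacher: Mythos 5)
Your overall skeleton matches the paper's: duality through the $M^{\rho}$ norm, expansion into monomials in $v$ and $z$, Littlewood--Paley decomposition with the $\rho$ derivatives on the top frequency, the bilinear estimate of Corollary \ref{bilinear} as the source of the half-derivative gain, Theorem \ref{zbound} for the $z$-slots, and H\"older in time for the all-$v$ term. But the central mechanism as you describe it does not work: you propose to always apply the bilinear estimate to the pair (test function, top-frequency factor), or to the two top factors when $N_4\sim N_5$. In the dominant regime --- which is the paper's Case 4, where the test function sits at frequency $N_w\sim N_5$ and the other four factors are much lower --- that pairing involves two comparable frequencies, so Corollary \ref{bilinear} returns $N_5^{\frac{d-1}{2}-}N_5^{-\frac12+}=N_5^{\frac{d-2}{2}+}$, a loss rather than the claimed $N_5^{-1/2+}$. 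In the complementary regime $N_w\ll N_5$ the factor $N_w^{\frac{d-1}{2}-}$ lands on the test function, which is only normalized in $Y^0$ and has no spare regularity to absorb it; summing over $N_w\lesssim N_5$ again produces a net $N_5^{\frac{d-2}{2}+}$. So your claimed net exponents ($\rho-s-\frac12$ or $-\frac12$ at the top frequency) do not follow from your pairing. The paper's proof instead chooses the pairing case by case: when the test function is at top frequency it pairs the test function with the \emph{lowest}-frequency factor (Cases 4.a--4.e), so the $\frac{d-1}{2}$ loss sits at $N_1$ where it is partially absorbed by that factor's own weight; when the top factor is a $v$ it pairs that $v$ with a genuinely lower factor (Cases 2.b, 2.c); and in two cases it avoids the bilinear estimate altogether (Case 2.a, via the threshold split $N_2\gtrless N_1^{1/2(d-1)}$, and Case 3, by splitting $\langle\nabla\rangle^{\rho}$ as $\rho/2+\rho/2$ over two comparable high-frequency $z$'s, which uses $\rho<2s$).

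A second, related gap is your claim that $\rho>\frac{d-1}{2}$ makes the low-frequency dyadic sums converge. That is true for $v$-factors (measured in $X^{\rho}$), but the $z$-factors carry only $s<\frac{d-1}{2}$ derivatives, and this supercritical deficit at low frequencies is exactly where the real difficulty lies: after the bilinear estimate the low $z$-slots leave positive powers such as $N_i^{\frac{d-1}{2}-s}$ or $N_1^{\frac{d-1}{2}-s}$ that must be distributed over several factors and summed up to $N_5$, which is why the paper needs $s>\frac{d-1}{8}$ (automatic from $s>\frac{d-2}{2}$, $d\ge3$), the $N_2\le N_1^{1/2(d-1)}$ device in Case 2, and the constraint $\rho<2s$ in Case 3. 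Your proposal acknowledges ``bookkeeping'' but the conditions you cite would not by themselves close these sums. The parts that do align with the paper --- extracting $T^{\theta}$ from Theorem \ref{zbound} in every $z$-slot, from non-sharp H\"older in time in the $v^5$ case, and reducing the Lipschitz bound to the same multilinear estimates --- are fine, but the estimate itself needs the case-adapted pairings described above.
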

\begin{proof} 
We only prove the first part, as the proof of the second is similar. For dyadic $N \ge 1$ define 
\begin{equation*}
\begin{split}
\Gamma_N(v) &= P_{\le N} \Gamma(v) \\
&= P_{\le N} \left(\pm \int_{0}^t -iS(t-t')[|v+z|^4(v+z)](t')dt' \right) \\
&=\pm \int_{0}^t -iS(t-t')P_{\le N}[|v+z|^4(v+z)(t')]dt' .\\
\end{split}
\end{equation*}
By (\ref{eqn:39}) we have
\begin{equation*} \label{eq:57}
\begin{split}
\|\Gamma_N v\|_{X^{\rho}} &= \|P_{\le N}[(v+z)|v+z|^4]\|_{M^{\rho}}\\
&\le \sup_{v_6 | \|v_6\|_{Y^{0} \le 1}} \int_0^T \int_{\mathbb{R}^d} \langle \nabla^{\rho} \rangle |v + z|^4(v+z)\overline{P_{\le N}v_6} dx dt .\\
\end{split}
\end{equation*}

Now noting that 
\begin{equation*}
\begin{split}
\|\Gamma v\|_{X^{\rho}} &= \lim_{N \rightarrow \infty} \|\Gamma_N v\|_{X^{\rho}} \\
&= \sup_{v_6 | \|v_6\|_{Y^{0} \le 1}} \int_0^T \int_{\mathbb{R}^d} \langle \nabla^{\rho} \rangle |v + z|^4(v+z)\overline{v_6} dx dt\\
\end{split}
\end{equation*}
it suffices to show that for small $\theta >0$  this integral is $\le CT^{\theta}(R^5+\|v\|^5_{X^{\rho}})\|v_6\|_{Y^{0}}$ off a set of measure $c_1e^{-c_2R^2/\|\phi\|^2_{H^s}}$. We do this by proving the bound

\begin{equation} \label{eq:59}
\int_0^T \int_{\mathbb{R}^d} \langle \nabla \rangle^{\rho}[|v+z|^4(v+z)\overline{v_6}dx dt \le CT^{\theta}(R^5+\|v\|^5_{X^{\rho}})\|v_6\|_{Y^0}
\end{equation}
via case by case analysis of terms of the form $\langle \nabla \rangle^{\rho}[w_1w_2w_3w_4w_5]v_6$ where each $w_i$ is either 
\newline $v_i=v$ or $z_i=z$ (or it's complex conjugate), and each is dyadically decomposed into \newline $\sum_{N_i \ge 1, \text{dyadic}}P_{N_i}v_i, \sum_{N_j \ge 1, \text{dyadic}}P_{N_j}z_j$. Dyadic decomposition allows us to assume the derivatives are placed on the highest frequency term, or split them between two comparably high frequency terms. Also we will just write $w_i$ instead of $P_{N_i}w_i$ as we sum over dyadic integers $N_i \ge 1$.

We split the cases into four main cases based on whether each $w_i$ is a $v_i$ or $z_i$, and which two terms have the highest frequencies:

\begin{itemize} 
\item{\bf{Case 1}} All five terms are $v$.

\item{\bf{Case 2}} At least one term is a $v$ and it has one of the two highest frequencies.

\item{\bf{Case 3}} The two highest frequencies are on $z$ terms. 

\item{\bf{Case 4}} The two highest frequencies are on a $z$ term and the $v_6$ term. 
\end{itemize}
 
These four cases are then divided into smaller subcases:

\begin{enumerate} 
\item{\bf{Case 1: $v_1v_2v_3v_4v_5v_6$}}

In this case all terms are $v$'s. We do not do dyadic decompositions, instead we cut the frequency space into 5 pieces based on which frequency is largest, and assume without loss of generality that $\xi_1$ is. We split into two cases, based on the value of $\rho$, which determines which exponents we can use in H\"older's inequality. 

\begin{itemize}
\item{\bf{1.a:}} $\rho < \frac{d}{2} -\frac{1}{4}$.

Noting that $\rho < \frac{d}{2} - \frac{1}{4}$, we apply H\"older's inequality with $t$ exponents \newline $\left (\frac{2(d+2)}{d(2d-4\rho-1)} ,\frac{4(d+2)}{d+2-d(d-2\rho)} \times 4, \frac{2(d+2)}{d} \right )$ and $x$ exponents  $\left (\frac{2(d+2)}{8\rho+4-3d)} ,\frac{2(d+2)}{d-2\rho} \times 4, \frac{2(d+2)}{d} \right) $ and Lemma \ref{Y0}:
\begin{equation*}
\begin{split} 
I &=\int_0^T \int_{\mathbb{R}^d} \langle \nabla \rangle^{\rho}v_1v_2v_3v_4v_5v_6 dxdt \\
&\le \|\langle \nabla \rangle^{\rho}v_1\|_{L^{\frac{2(d+2)}{d(2d-4\rho-1)}}L^{\frac{2(d+2)}{8\rho+4-3d}}} \|v_2\|_{L^{\frac{4(d+2)}{d+2-d(d-2\rho)}}L^{\frac{2(d+2)}{d-2\rho}}} \|v_3\|_{L^{\frac{4(d+2)}{d+2-d(d-2\rho)}}L^{\frac{2(d+2)}{d-2\rho}}}\\
&\times  \|v_4\|_{L^{\frac{4(d+2)}{d+2-d(d-2\rho)}}L^{\frac{2(d+2)}{d-2\rho}}} \|v_5\|_{L^{\frac{4(d+2)}{d+2-d(d-2\rho)}}L^{\frac{2(d+2)}{d-2\rho}}} \|v_6\|_{L^{\frac{2(d+2)}{d}}} \\
&\le \|v_1\|_{Y^{\rho}} T^{\theta} \|v_2\|_{L^{\frac{2(d+2)}{d-2\rho}}L^{\frac{2(d+2)}{d-2\rho}}} \|v_3\|_{L^{\frac{2(d+2)}{d-2\rho}}L^{\frac{2(d+2)}{d-2\rho}}} \|v_4\|_{L^{\frac{2(d+2)}{d-2\rho}}L^{\frac{2(d+2)}{d-2\rho}}} \|v_5\|_{L^{\frac{2(d+2)}{d-2\rho}}L^{\frac{2(d+2)}{d-2\rho}}} \|v_6\|_{Y^{0}}\\
&\le T^{\theta}\Pi_{i=1}^{5} \|v_i\|_{Y^{\rho}} \|v_6\|_{Y^{0}}\\
\end{split}
\end{equation*}
for some $\theta >0$.
\item{\bf{1.b:}} $\frac{d}{2} -\frac{1}{4} \le \rho < s + \frac{1}{2}$.

Noting that $\rho < \frac{d}{2} - \frac{1}{4}$ we apply H\"older's inequality with $t$ exponents \newline $\left (\infty ,\frac{8(d+2)}{d+4} \times 4, \frac{2(d+2)}{d} \right )$ and $x$ exponents  $\left (2,4(d+2) \times 4, \frac{2(d+2)}{d} \right) $ and Lemma \ref{Y0},
\begin{equation*}
\begin{split} 
I &=\int_0^T \int_{\mathbb{R}^d} \langle \nabla \rangle^{\rho}v_1v_2v_3v_4v_5v_6 dxdt \\
&\le \|\langle \nabla \rangle^{\rho}v_1\|_{L^{\infty}L^{2}} \|v_2\|_{L^{\frac{8(d+2)}{d+4}}L^{4(d+2)}} \|v_3\|_{L^{\frac{8(d+2)}{d+4}}L^{4(d+2)}}\\
&\times  \|v_4\|_{L^{\frac{8(d+2)}{d+4}}L^{4(d+2)}} \|v_5\|_{L^{\frac{8(d+2)}{d+4}}L^{4(d+2)}} \|v_6\|_{L^{\frac{2(d+2)}{d}}} \\
&\le \|v_1\|_{Y^{\rho}} T^{\theta} \|v_2\|_{L^{4(d+2)}}    \|v_3\|_{L^{4(d+2)}}  \|v_4\|_{L^{4(d+2)}} \|v_5\|_{L^{4(d+2)}}   \|v_6\|_{Y^{0}}\\
&\le T^{\theta}\|v_1\|_{Y^{\rho}} \Pi_{i=2}^{5} \|v_i\|_{Y^{\frac{d}{2} - \frac{1}{4}}} \|v_6\|_{Y^{0}}\\
\end{split}
\end{equation*}
for some $\theta >0$.
\end{itemize}
\item{\bf{Case 2: $v_1w_2w_3w_4z_5v_6$,  $N_1 \gtrsim N_2,N_3,N_4,N_5$}}

In this case there is at least one $v$ term and the highest frequency term is a $v$. Therefore we can assume the derivatives fall on the $v_1$ term with the highest frequency.
\begin{itemize}
\item{\bf{2.a:}} $w_2, w_3, w_4$ are all $z$ terms, $N_5 \ge N_4 \ge N_3 \ge N_2 \ge N_1^{1/2(d-1)}$  

We have assumed that $v_1$ has the highest frequency: $N_1 \ge N_2,N_3,N_4,N_5$. Now we apply H\"older's inequality, Lemma \ref{Y0} and our probabilistic bound on the linear term, Theorem \ref{zbound}, and note that $s > \frac{1}{2}$:  
\begin{equation*}
\begin{split} 
I &=\int_0^T \int_{\mathbb{R}^d} \langle \nabla \rangle^{\rho}v_1z_2z_3z_4z_5v_6 dxdt\\
&\le \|\langle \nabla \rangle^{\rho}v_1\|_{L^{2(d+2)/d}}\|z_2\|_{L^{2(d+2)}}\|z_3\|_{L^{2(d+2)}}\|z_4\|_{L^{2(d+2)}}\|z_5\|_{L^{2(d+2)}}\|v_6\|_{L^{2(d+2)/d}}\\
&\le \|v_1\|_{Y^{\rho}} (N_2N_3N_4N_5)^{-s}\Pi_{i=2}^{5} \| \langle \nabla \rangle^{s} z_i\|_{L^{2(d+2)}}\|v_6\|_{Y^{0}}\\
&\le \|v_1\|_{Y^{\rho}} (N_2N_3N_4N_5)^{\frac{-1}{2}}\Pi_{i=2}^{5} \|\langle \nabla \rangle^{s}  z_i\|_{L^{2(d+2)}}\|v_6\|_{Y^{0}}\\
&\le \|v_1\|_{Y^{\rho}} (N_2)^{-2}\Pi_{i=2}^{5} \|\langle \nabla \rangle^{s}  z_i\|_{L^{2(d+2)}}\|v_6\|_{Y^{0}}\\
&\le \|v_1\|_{Y^{\rho}} (N_1)^{\frac{-1}{d-1}}\Pi_{i=2}^{5} \| \langle \nabla \rangle^{s} z_i\|_{L^{2(d+2)}}\|v_6\|_{Y^{0}}.\\
\end{split}
\end{equation*}
Noting that $N_1$ is the highest frequnecy, the sum over all frequencies is bounded by $\|v\|_{Y^{\rho}}T^{\theta}R^{4}\|v_6\|_{Y^{0}}$ off a set of  measure $c_1e^{-c_2R^2/\|\phi\|^2_{H^s}}$.

\item{\bf{2.b:}} $w_2, w_3, w_4$ are all $z$ terms, $N_2 \le N_1^{1/2(d-1)}$

We apply H\"older's inequality, \ref{zbound}, \ref{Y0} and our bilinear estimate \ref{bilinear}, utilizing the assumption that $N_2 \le N_1^{1/2(d-1)}$: 
\begin{equation*}
\begin{split} 
I &=\int_0^T \int_{\mathbb{R}^d} \langle \nabla \rangle^{\rho}v_1z_2z_3z_4z_5v_6 dxdt\\
&\le \|\langle \nabla \rangle^{\rho}v_1 z_2\|_{L^{2}}\|z_3\|_{L^{3(d+2)}}\|z_4\|_{L^{3(d+2)}}\|z_5\|_{L^{3(d+2)}}\|v_6\|_{L^{2(d+2)/d}}\\
&\le N_1^{\frac{-1}{2}+}\|v_1\|_{Y^{\rho}} N_2^{\frac{d-1}{2}-}\|z_2\|_{Y^{0}}\|z_3\|_{L^{3(d+2)}}\|z_4\|_{L^{3(d+2)}}\|z_5\|_{L^{3(d+2)}}\|v_6\|_{L^{2(d+2)/d}}\\
&\le N_1^{\frac{-1}{2}+}\|v_1\|_{Y^{\rho}}N_1^{\frac{1}{4}}\|z_2\|_{Y^0}T^{0+}R^3\|v_6\|_{Y^0}\\
&\le N_1^{\frac{-1}{4}+}\|v_1\|_{Y^{\rho}}T^{0+}R^4\|v_6\|_{Y^0},\\
\end{split}
\end{equation*}
which is $\le\|v\|_{Y^{\rho}}T^{0+}R^4$ off a set of small measure.

\item{\bf{2.c:}} $w_2 =v_2$ is a $v$ term, and the others can be anything

In this case we still have $N_1 \ge N_i, i=2,\ldots,6$. Applying H\"older's inequality, \ref{Y0}, \ref{zbound} and \ref{bilinear}, we have
\begin{equation*}
\begin{split} 
I &=\int_0^T \int_{\mathbb{R}^d} \langle \nabla \rangle^{\rho}v_1w_2w_3w_4z_5v_6 dxdt\\
&\le \|\langle \nabla \rangle^{\rho}v_1 v_2\|_{L^{2}}\|w_3\|_{L^{2(d+2)}}\|w_4\|_{L^{4(d+2)}}\|z_5\|_{L^{4(d+2)}}\|v_6\|_{L^{2(d+2)/d}}\\
&\le N_1^{\frac{-1}{2}+}\|v_1\|_{Y^{\rho}}N_2^{\frac{d-1}{2}-}\|v_2\|_{Y^0}\|w_3\|_{L^{2(d+2)}}\|w_4\|_{L^{4(d+2)}}\|z_5\|_{L^{4(d+2)}}\|v_6\|_{Y^0}\\
&\le N_1^{\frac{-1}{2}+}\|v_1\|_{Y^{\rho}}\|v_2\|_{Y^{(d-1)/2}}\|w_3\|_{L^{2(d+2)}}\|w_4\|_{L^{4(d+2)}}\|z_5\|_{L^{4(d+2)}}\|v_6\|_{Y^0}.\\
\end{split}
\end{equation*}
Now if $w_3$ is a $v$ term, then $\|w_3\|_{L^{2(d+2)}} \lesssim \|v_3\|_{Y^{(d-1)/2}}$ as required. If $w_3$ is a $z$ term, then $\|w_3\|_{L^{2(d+2)}} \le T^{0+}R$ off a set of small measure. So either way this term is bounded. 

If $w_4$ is a $z$ term then, again, $\|w_4\|_{L^{4(d+2)}} \le T^{0+}R$ off a set of small measure. The only trouble is if $w_4$ is a $v$ term, in which case our inequality only gives us:
\begin{equation*}
\begin{split}
\|w_4\|_{L^{4(d+2)}} & \lesssim \left\|  |\nabla |^{\frac{d}{2} -\frac{1}{4}}v_4 \right\|_{Y^0} \\
&\lesssim N_4^{1/4}\|v_4\|_{Y^{(d-1)/2}} .\\
\end{split}
\end{equation*}We have an extra quarter derivative, however, since $N_1$ is the biggest frequency we have $N_4^{1/4} \le N_1^{1/4}$, which is absorbed by the $N_1^{\frac{-1}{2}+}$ term.

Therefore each term in this case is bounded by $T^{0+}(\|v\|_{X^{\rho}}^5+ R^5)\|v_6\|_{Y^0}$ off a set of measure $c_1e^{-c_2R^2/\|\phi\|^2_{H^s}}$.

\end{itemize}

\item{\bf{Case 3:} $w_1w_2w_3z_4z_5v_6$}, $N_4 \sim N_5 \gtrsim N_1,N_2,N_3, N_6$.

In this case the two biggest frequencies are on $z$ terms, $z_4$ and $z_5$. 
The first three terms are denoted $w_i, i=1,2,3$ and represent either $v$ or $z$. Assume without loss of generality that $N_1 \le N_2 \ldots \le N_4 \sim N_5 \ge N_6$. Applying H\"older's inequality for exponents $\left( 2(d+2) \times 3, \frac{4(d+2)}{d+1} \times 2, \frac{2(d+2)}{d} \right)$, \ref{Y0}, and \ref{zbound} we have
\begin{equation*}
\begin{split} 
\int_0^T \int_{\mathbb{R}^d} w_1w_2w_3z_4\langle \nabla \rangle^{\rho}z_5v_6 dxdt &\le \|w_1\|_{L^{2(d+2)}} \|w_2\|_{L^{2(d+2)}} \|w_3\|_{L^{2(d+2)}} \\
&\times \|\langle \nabla \rangle^{\rho/2}z_4\|_{L^{\frac{4(d+2)}{d+1}}}\|\langle \nabla \rangle^{\rho/2}z_5\|_{L^{\frac{4(d+2)}{d+1}}} \|v_6\|_{L^{\frac{2(d+2)}{d}}} .\\
\end{split}
\end{equation*}
For $\frac{\rho}{2} < s$ this term is bounded by $T^{0+}R^2(\|v\|_{Y^{(d-1)/2}}^3+T^{0+}R^3)\|v_6\|_{Y^{0}}$ off a set of measure $c_1e^{-c_2R^2/\|\phi\|^2_{H^s}}$. Note that $\rho < s+\frac{1}{2}<2s$ so $\rho$ satisfies the requirements. 

\item{\bf{Case 4:} $w_1w_2w_3w_4z_5v_6$}, $N_5 \sim N_6 \gtrsim N_1,N_2,N_3, N_4$.
This is the biggest case by far and we divide it into several subcases based on how many $v's$ there are.

\begin{itemize}
\item{\bf{4.a}} $z_1z_2z_3z_4z_5v_6$, $N_5 \sim N_6 \gtrsim N_1,N_2,N_3, N_4$.
Assume without loss of generality that $N_1 \le N_2 \ldots N_5 \sim N_6$. By H\"older's inequality with exponents $(2,8,8,8,8)$, \ref{bilinear} and \ref{zbound}, we have 
\begin{equation*}
\begin{split} 
I &= \int_0^T \int_{\mathbb{R}^d} z_1z_2z_3z_4\langle \nabla \rangle^{\rho}z_5v_6 dxdt \\
&\le \|z_1v_6\|_{L^2}\|z_2\|_{L^8}\|z_3\|_{L^8}\|z_4\|_{L^8}\|\langle \nabla \rangle^{\rho}z_5\|_{L^8} \\
&\lesssim N_1^{\frac{(d-1)}{2}-s+}N_5^{\frac{-1}{2}+}\|\langle \nabla \rangle ^s z_1\|_{Y^0}\|v_6\|_{Y^0}\|z_2\|_{L^8}\|z_3\|_{L^8}\|z_4\|_{L^8}\|\langle \nabla \rangle^{\rho}z_5\|_{L^8} \\
&\lesssim  N_1^{\frac{(d-1)}{2}+}(N_1N_2N_3N_4)^{-s}N_5^{\rho+\frac{-1}{2}-s+} \Pi_{i=2}^{5}\|\langle \nabla \rangle ^s z_i\|_{Y^0}\|\langle \nabla \rangle^{s}z_i\|_{L^8}\|v_6\|_{Y^0} .\\
\end{split}
\end{equation*}
When $s+\frac{1}{2}> \rho$ and $s > \frac{d-1}{8}$ the powers of the frequencies are negative and the sum is bounded by $T^{0+}R^5\|v_6\|_{Y^0}$. We have assumed $s+ \frac{1}{2} > \rho \ge \frac{d-1}{2}$ in the statement of the theorem, and note that for $d \ge 3$, $\frac{d-2}{2} > \frac{d-1}{8}$ and therefore we only require $s > \frac{d-2}{2}$, however, $s > \rho - \frac{1}{2} \ge \frac{d-2}{2}$. Therefore this term is bounded by $T^{0+}R^5\|v_6\|_{Y^{0}}$ off a set of measure $c_1e^{-c_2R^2/\|\phi\|^2_{H^s}}$. 

In all following cases, we can assume there is at least one $v$, at least one $z$, and that $N_5$ and $N_6$ are the highest frequencies.

\item{\bf{4.b}} $v_1z_2z_3z_4z_5v_6$, $N_5 \sim N_6 \gtrsim N_1,N_2,N_3, N_4$.

Assume without loss of generality that $N_2 \le  N_3 \ldots \le N_5 \sim N_6 \ge N_1$. 

Noting that $N_2 \le N_3,N_4$ we apply H\"older's inequality, \ref{bilinear} and \ref{zbound} to obtain
\begin{equation*}
\begin{split} 
I &= \int_0^T \int_{\mathbb{R}^d} v_1z_2z_3z_4\langle \nabla \rangle^{\sigma}z_5v_6 dxdt\\
 &\le \|z_2v_6\|_{L^2}\|v_1\|_{L^{2(d+2)}}\|z_3\|_{L^{6(d+2)/(d+1)}}\|z_4\|_{L^{6(d+2)/(d+1)}}\|\langle \nabla \rangle^{\sigma}z_5\|_{L^{6(d+2)/(d+1)}} \\
&\lesssim N_6^{\frac{-1}{2}+}N_2^{\frac{d-1}{2}-}\|z_2\|_{Y^0}\|v_6\|_{Y^0} \|v_1\|_{Y^{\frac{d-1}{2}}}(N_3N_4N_5)^{-s}R^3 \\
&\lesssim N_6^{\frac{-1}{2}+}N_2^{\frac{d-1}{2}-s-}(N_3N_4N_5)^{-s}T^{0+}R^4\|v_1\|_{Y^{\frac{d-1}{2}}}\|v_6\|_{Y^0} \\
&\lesssim N_6^{\frac{-1}{2}+}(N_2N_3N_4N_5)^{\frac{d-1}{8}-s}\|v_1\|_{Y^{\frac{d-1}{2}}}T^{0+}R^4\|v_6\|_{Y^0}.\\
\end{split}
\end{equation*}
When $s > \frac{d-1}{8}$ the powers of the frequencies are negative, and the sum is bounded by $\|v\|_{Y^{\frac{d-1}{2}}}T^{0+}R^4\|v_6\|_{Y^0}$. As demonstrated in the previous case, $s > \frac{d-1}{8}$.

\item{\bf{4.c}} $v_1v_2z_3z_4z_5v_6$, $ N_5 \sim N_6 \gtrsim N_1,N_2,N_3, N_4$.

Assume without loss of generality that $N_1 \ge N_2, N_3 \le N_4 \le N_5 \sim N_6$.
By H\"older's Inequality, \ref{bilinear} and \ref{zbound} we have 
\begin{equation*}
\begin{split} 
I &= \int_0^T \int_{\mathbb{R}^d} v_1v_2z_3z_4\langle \nabla \rangle^{\rho}z_5v_6 dxdt\\
I &\le \|v_1v_6\|_{L^2}\|v_2\|_{L^{2(d+2)}}\|z_3\|_{L^{\frac{6(d+2)}{d+1}}}\|z_4\|_{L^{\frac{6(d+2)}{d+1}}}\|\langle \nabla \rangle^{\rho}z_5\|_{L^{\frac{6(d+2)}{d+1}}} \\
&\lesssim N_6^{\frac{-1}{2}+}N_1^{\frac{d-1}{2}-}\|v_1\|_{Y^0}\|v_6\|_{Y^0} \|v_2\|_{Y^{\frac{d-1}{2}}}(N_3N_4)^{-s}(N_5)^{\rho}T^{0+}R^3\\
&\lesssim N_1^{0-} (N_3N_4)^{-s}N_5^{\rho-s-\frac{1}{2}+}\|v_1\|_{Y^{\frac{d-1}{2}}}\|v_2\|_{Y^{\frac{d-1}{2}}}T^{0+}R^3\|v_6\|_{Y^0} \\
&\lesssim N_1^{0-}N_2^{0-} (N_3N_4)^{-s}N_5^{\rho-s-\frac{1}{2}+}\|v_1\|_{Y^{\frac{d-1}{2}}}\|v_2\|_{Y^{\frac{d-1}{2}}}T^{0+}R^3\|v_6\|_{Y^0} .\\
\end{split}
\end{equation*}
Since $s +\frac{1}{2}< \rho$, all powers of the frequencies are negative and the sum is bounded by $\|v\|^2_{Y^{\frac{d-1}{2}}}T^{0+}R^3\|v_6\|_{Y^0}$. 

\item{\bf{4.d}} $v_1v_2v_3z_4z_5v_6$, $N_5 \sim N_6 \gtrsim N_1,N_2,N_3, N_4$.

Assume without loss of generality that $N_1 \ge N_2 \ge N_3, N_4 \le N_5$.
By H\"older's Inequality, \ref{bilinear} and \ref{zbound}  we have
\begin{equation*}
\begin{split} 
I &= \int_0^T \int_{\mathbb{R}^d} v_1v_2v_3z_4\langle \nabla \rangle^{\rho}z_5v_6 dxdt\\
I &\le \|v_1v_6\|_{L^2}\|v_2\|_{L^{2(d+2)}}\|v_3\|_{L^{2(d+2)}}\|z_4\|_{L^{\frac{4(d+2)}{d}}}\|\langle \nabla \rangle^{\rho}z_5\|_{L^{\frac{4(d+2)}{d}}} \\
&\lesssim  N_6^{\frac{-1}{2}+}N_1^{\frac{d-1}{2}-}\|v_1\|_{Y^0}\|v_6\|_{Y^0} \|v_2\|_{Y^{\frac{d-1}{2}}} \|v_3\|_{Y^{\frac{d-1}{2}}} N_4^{-s}N_5^{\rho - s}T^{0+}R^2\\
&\lesssim N_1^{0-}N_4^{-s}N_5^{\rho-s-\frac{1}{2}+}\|v_1\|_{Y^{\frac{d-1}{2}}}\|v_2\|_{Y^{\frac{d-1}{2}}} \|v_3\|_{Y^{\frac{d-1}{2}}}T^{0+}R^2\|v_6\|_{Y^0} .\\
\end{split}
\end{equation*}
Since $s +\frac{1}{2}< \rho$, all powers of the frequencies are negative and the sum is bounded by $\|v\|^3_{Y^{\frac{d-1}{2}}}T^{0+}R^2\|v_6\|_{Y^0}$. 

\item{\bf{4.e}} $v_1v_2v_3v_4z_5v_6$, $N_5 \sim N_6 \gtrsim N_1,N_2,N_3, N_4$.

Assume without loss of generality that $N_1 \le N_2 \le N_3 \le N_4 \le N_5$.
By H\"older's Inequality, \ref{bilinear} and \ref{zbound}  we have
\begin{equation*}
\begin{split} 
I &= \int_0^T \int_{\mathbb{R}^d} v_1v_2v_3v_4\langle \nabla \rangle^{\rho}z_5v_6 dxdt\\
I &\le \|v_1v_6\|_{L^2}\|v_2\|_{L^{2(d+2)}}\|v_3\|_{L^{2(d+2)}}\|v_4\|_{L^{\frac{4(d+2)}{d}}}\|\langle \nabla \rangle^{\rho}z_5\|_{L^{\frac{4(d+2)}{d}}} \\
&\lesssim  N_6^{\frac{-1}{2}+}N_1^{\frac{d-1}{2}-}\|v_1\|_{Y^0}\|v_6\|_{Y^0} \|v_2\|_{Y^{\frac{d-1}{2}}} \|v_3\|_{Y^{\frac{d-1}{2}}} \|v_4\|_{Y^{\frac{d}{4}}}N_5^{\rho - s}T^{0+}R\\
&\lesssim N_1^{0-}N_4^{\frac{-(d-2)}{4}}N_5^{\rho-s-\frac{1}{2}+}\|v_1\|_{Y^{\frac{d-1}{2}}}\|v_2\|_{Y^{\frac{d-1}{2}}} \|v_3\|_{Y^{\frac{d-1}{2}}}\|v_4\|_{Y^{\frac{d-1}{2}}}T^{0+}R\|v_6\|_{Y^0} .\\
\end{split}
\end{equation*}
Since $s +\frac{1}{2}< \rho$, all powers of the frequencies are negative and the sum is bounded by $\|v\|_{Y^{\frac{d-1}{2}}}^4T^{0+}R\|v_6\|_{Y^{0}}$ off a set of small measure.
\end{itemize}
\end{enumerate}

In each case the term is bounded by, for some $\theta >0$, $CT^{\theta}(R^5+\|v\|^5_{Y^{\rho}})\|v_6\|_{Y^{0}}$.  
This completes the proof of the first part of the proposition.
\newline The proof of $\|\Gamma v_1 - \Gamma v_2\|_{X^{\rho}} \le CT^{\theta}(R^4+\|v_1\|^4_{X^{\rho}} + \|v_2\|^4_{X^{\rho}})\|v_1-v_2\|_{X^{\rho}}$  off a set of measure $c_1e^{-c_2R^2/\|\phi\|^2_{H^s}}$ is similar and is omitted. 
\end{proof}
Using this key proposition we can close the fixed point argument in the final theorem. 

\textit{Proof of Theorem \ref{thm1}}
\newline Let $B_r$ be the ball of radius $r$ in $X^{\rho}([0,T))$ with $\frac{d}{2} > s+\frac{1}{2} > \rho \ge \frac{d-1}{2}$ as in the previous proposition. I claim that for small enough $T$ and small but fixed $r$ the map $\Gamma$ is a contraction on $B_r$ outside a set of measure $c_1e^{-c_2R^2/\|\phi\|^2_{H^s}}$. See section 1.6 of \cite{Dispersive} for an overview of contraction based fixed point arguments. 

To apply the theory for fix point arguments we require, off a small set, the contraction conditions
\begin{itemize}
\item{} $\|\Gamma v\|_{X^{\rho}([0,T))} \le r$ for $v \in B_r$ 
\item{} $\|\Gamma v_1 -\Gamma v_2\|_{X^{\rho}([0,T))} \le \frac{1}{2}\|v_1 - v_2\|_{X^{\rho}([0,T))}.$
\end{itemize}
By the bounds from the proposition, we have for all $R$ and some fixed constant $C$, \newline $\|\Gamma v\|_{X^{\rho}} \le CT^{\theta}(R^5+r^5)$ and $\|\Gamma v_1 -\Gamma v_2\|_{X^{\rho}} \le C\|v_1 - v_2\|_{X^{\rho}}T^{\theta}(2r^4+R^4)$ off a set of measure $c_1e^{-c_2R^2/\|\phi\|_{H^{s}}}$. 

The contraction conditions are satisfied if we select $r,R,T$ such that 
\begin{equation}
\begin{split}
r &\le R\\
CT^{\theta}R^5 &\le r/8\\
\end{split}
\end{equation} 
We can fix a value of $r$ to satisfy the first bound. Selecting $T$ such that $T \sim R^{\frac{-5}{\theta}}$ the second bound of a contraction is satisfied, and we conclude that the map $\Gamma$ has a fixed point in $B_r$.

Therefore for sufficiently small $T$, the equation $\Gamma v = v$ has a solution in $B$ for every $\phi^{\omega}$ off this set of measure $c_1e^{-c_2R^2/\|\phi\|^2_{H^{s}}}$. Setting $\alpha =\frac{-2\theta}{5}$ there exists a set $\Omega_T \subset \Omega$ of measure $\ge 1-c_1e^{-c_2/T^{\alpha}\|\phi\|^2_{H^{s}}}$ such that for $t \in [0,T)$ the Duhamel equation 
\begin{equation}
v(t) = \pm \int_{0}^t -iS(t-t')[|v+z|^4(v+z)](t')dt'
\end{equation}
has a unique solution in $X^{\rho}([0,T))$. The same argument proves the existence of a solution in $X^{\rho}((-T,0])$ on a set of the same measure. Taking $u(t) = S(t)\phi+ v(t)$ we have a solution on the interval $(-T,T)$ in the class $H^s(\mathbb{R}^d) + C((-T,T) \rightarrow H^{\rho}(\mathbb{R}^3)) \subset H^s(\mathbb{R}^d)$.

\section{A Condition for Global Well-Posedness}  \label{mathrefs}
We now present the proof of Theorem 1.2. The proof relies upon the following proposition. 

\begin{proposition} \label{60}
Suppose $0 < c < \frac{1}{8}$, $\frac{7}{8} < s < 1$ and $\|\phi^{\omega}\|_{H^s(\mathbb{R}^3)} < R$. There exists a small positive constant  $\epsilon \ll \frac{1}{R}$ such that for any interval $[t_1,t_2]$ satisfying, $|t_1-t_2|\le 1$, $\|v\|_{L^{10}L^{10}([t_1,t_2] \times \mathbb{R}^3)} < \epsilon$ and $\|\langle \nabla \rangle ^{s} z\|_{L^qL^r([t_1,t_2]\times \mathbb{R}^3)} < \epsilon$ for the pairs \newline $(q,r) \in  \{(10,10), (15/2,15/7),(30/7,15)\}$, we have $\|v\|_{S^{1+c}([t_1,t_2]\times \mathbb{R}^3)} \lesssim \|v(t_1)\|_{H^{1+c}(\mathbb{R}^3)} + C(\epsilon)$. 
\end{proposition}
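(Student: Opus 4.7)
The plan is to close a standard Strichartz-based bootstrap on the Duhamel formula for $v$. Since $v$ solves $iv_t + \Delta v = \pm(v+z)|v+z|^4$ with initial data $v(t_1)$, the Strichartz inequality \eqref{300} for the $S^{1+c}$/$N^{1+c}$ pair gives
\begin{equation*}
\|v\|_{S^{1+c}([t_1,t_2]\times\mathbb{R}^3)} \lesssim \|v(t_1)\|_{H^{1+c}(\mathbb{R}^3)} + \|(v+z)|v+z|^4\|_{N^{1+c}([t_1,t_2]\times\mathbb{R}^3)},
\end{equation*}
so the problem reduces to estimating the quintic nonlinearity in a convenient dual Strichartz norm such as $L^{2}_t L^{6/5}_x$ (dual to the admissible pair $(2,6)$) or $L^{10/9}_t L^{30/17}_x$ (dual to $(10, 30/13)$).

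Next I would expand $(v+z)|v+z|^4$ into a finite sum of multilinear monomials $w_1 w_2 w_3 w_4 w_5$ with each $w_i \in \{v,\bar v,z,\bar z\}$, and distribute the derivative $\langle\nabla\rangle^{1+c}$ across the five factors via fractional Leibniz/Kato-Ponce. The crucial constraint is that no $z$ factor may carry more than $s$ derivatives, since only norms of $\langle\nabla\rangle^s z$ are controlled by hypothesis; any surplus must be shifted onto a $v$ factor. The assumptions $0<c<1/8$ and $7/8<s<1$ ensure $1+c-s<1/4<s$, so the surplus is strictly less than $s$ derivatives and can be absorbed by a single $v$ factor at no cost to the ambient $S^{1+c}$ norm (via the trivial inequality $\|\langle\nabla\rangle^{1+c-s} v\|_{\text{admissible}}\lesssim \|v\|_{S^{1+c}}$).

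Then I would apply Hölder in both time and space to each monomial: the factor carrying the derivative (almost always a $v$) is placed in a Strichartz-admissible space controlled by $\|v\|_{S^{1+c}}$, such as $L^{10}_t L^{30/13}_x$; each remaining $v$ is placed in $L^{10}_t L^{10}_x$ so that the smallness $\|v\|_{L^{10}L^{10}}<\epsilon$ is extracted; and each $z$ is placed at regularity $s$ into one of the three hypothesis norms $L^{10}L^{10}$, $L^{15/2}L^{15/7}$, or $L^{30/7}L^{15}$, selected so that the Hölder exponents sum correctly to the chosen dual pair. Assembling all configurations yields
\begin{equation*}
\|v\|_{S^{1+c}([t_1,t_2]\times\mathbb{R}^3)} \le C\|v(t_1)\|_{H^{1+c}(\mathbb{R}^3)} + C\epsilon^4 \|v\|_{S^{1+c}([t_1,t_2]\times\mathbb{R}^3)} + C(\epsilon),
\end{equation*}
with $C(\epsilon)\to 0$ as $\epsilon\to 0$; choosing $\epsilon\ll 1/R$ small enough to absorb the $\|v\|_{S^{1+c}}$ term into the left-hand side finishes the argument.

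The hardest case will be the purely linear monomial $|z|^4 z$, where no $\|v\|_{L^{10}L^{10}}$ smallness is available. There the full derivative $\langle\nabla\rangle^{1+c}$ has to be split between two distinct $z$ factors, with at most $s$ on one and the remaining $1+c-s<1/4$ on another, and the other three $z$'s distributed into the three given pairs to make the Hölder exponents sum to an admissible dual Strichartz pair. Verifying that such a split actually exists, while respecting the $s$-derivative ceiling on every $z$, is the delicate numerology of the proof; this is precisely where $7/8<s<1$, $0<c<1/8$, and the time-length condition $|t_2-t_1|\le 1$ (which converts any slack in a time Hölder exponent into a small power of $|t_2-t_1|$) are all needed. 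Once this baseline case works, the mixed monomials with $k\ge 1$ factors of $v$ are strictly easier, as each additional $v$ supplies either another $\epsilon$ (when placed in $L^{10}L^{10}$) or extra derivatives to absorb the shifted surplus.
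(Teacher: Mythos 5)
Your reduction via the Strichartz estimate and your treatment of the pure $v$-quintic term (H\"older with four factors in $L^{10}L^{10}$ and the derivative term in an admissible norm, then absorb $C\epsilon^4\|v\|_{S^{1+c}}$) matches the paper's Lemma \ref{lemma61}. The genuine gap is in every monomial whose highest-frequency factor is a $z$: your plan to ``shift the surplus $1+c-s$ derivatives onto a $v$ factor'' or, for $|z|^4z$, to ``split the derivative between two distinct $z$ factors'' is not something the fractional Leibniz rule provides. Kato--Ponce puts essentially the full $1+c$ derivatives on one factor at a time; a genuine split between factors is only legitimate, via Littlewood--Paley/paraproduct reasoning, when the two highest frequencies are comparable (there $(1+c)/2<s$ indeed saves you). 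In the high--low regime, where a single $z$ at frequency $N_5$ dominates all other factors, the product has frequency $\sim N_5$, so you are forced to pay $N_5^{1+c}$ while the hypotheses on $z$ only supply $N_5^{-s}$; the deficit $N_5^{1+c-s}>0$ makes the dyadic sum diverge, and no choice of H\"older exponents among the three given $z$-norms, the $L^{10}L^{10}$ smallness of $v$, and the dual Strichartz norm of the test function can produce a compensating negative power of $N_5$ (the test function carries no derivatives, and $|t_2-t_1|\le 1$ only yields powers of $T$, not of $N_5$).

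This is exactly where the paper's proof diverges from yours: it dyadically decomposes, pairs the high-frequency factor with a low-frequency one, and invokes the refined bilinear Strichartz estimate of Theorem \ref{visanbound} (Visan), interpolated against H\"older bounds (Lemmas \ref{lemma62}--\ref{lemma64}), to gain the crucial factor $N_5^{-1/4-s+\delta/2}$; combined with the loss $N_5^{1+c}$ this gives the exponent $3/4+c-s+\delta/2<0$, which is precisely why $c<\tfrac18$ and $s>\tfrac78$ appear. A further wrinkle you would also have to address is that this bilinear estimate is stated in terms of $(i\partial_t+\Delta)$ applied to the solution, so its use introduces the self-referential quantity $\|\langle\nabla\rangle u^5\|_{L^{3/2}L^{18/13}}$, which the paper must then bound separately and absorb using $\epsilon\ll 1/R$ (equations (\ref{79})--(\ref{81})). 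Without the bilinear refinement (or some substitute gaining a negative power of the highest frequency), your H\"older-plus-Leibniz scheme cannot close in the high--low cases, so the proposal as written does not prove the proposition.
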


We first give the proof of Theorem 1.2 given that Proposition 6.1 is true. The rest of the paper is devoted to proving Proposition 6.1.

\textit{Proof of Theorem 1.2}
Assume Proposition \ref{60} and the hypothesis of Theorem 1.2, that there exists such a function $\alpha$. Fix values $T,R$ and a set $\Omega_{T,R}'$ satisfying the properties outlined in Theorem 1.2.
By Theorem \ref{zbound} and Theorem \ref{thm2.1} there is a set $\Omega_{T,R} \subset \Omega'_{T,R} $ of measure at least $1- c_1e^{-c_2R^2/T\|\phi\|_{H^s}} -\alpha(T,R)$ such that for any $\omega \in \Omega_{T,R}$ and \newline $(q,r) \in  \{(\infty,2),(10,10), (15/2,15/7),(30/7,15)\}$ we have
\begin{equation}
\|\langle \nabla \rangle^s z\|_{L^qL^r([-T,T]\times \mathbb{R}^3)} < R,
\end{equation}
 and for any solution $v$ to (\ref{non}) we have
\begin{equation}
\|v\|_{L^{10}L^{10}([-T,T] \times \mathbb{R}^3)} <R.
\end{equation}

Now assume that $\omega$ is indeed in the set $\Omega_{T,R}$. Note that by the local well-posedness theory a solution exists on some short time interval $(-t,t)$. Suppose for sake of contradiction there is a pair of times $-T < T_{\min} < 0 < T_{\max} < T$ such that the solution $v(t)$ cannot be extended in $H^{1+c}$ past $(T_{\min},T_{\max})$. 

We know that $\|v\|_{L^{10}L^{10}((T_{\min},T_{\max}) \times \mathbb{R}^3)} <R$ and we have $\|\langle \nabla \rangle^s z\|_{L^qL^r([-T,T]\times \mathbb{R}^3)}< R$ for each necessary pair $(q,r)$, therefore  we can split $[T_{\min},T_{\max}]$ into a finite number of subintervals $I$ on which $\|v\|_{L^{10}L^{10}(I \times \mathbb{R}^3)}< \epsilon$ and $\|\langle \nabla \rangle^s z\|_{L^qL^r(I\times \mathbb{R}^3)} < \epsilon$ for \newline $(q,r) \in  \{(10,10), (15/2,15/7),(30/7,15)\}$.

This means that on each subinterval $[t_i,t_{i+1}]$ the conditions of Proposition 6.1 are met, and therefore the $\|v\|_{{S}^{1+c}([t_i,t_{i+1}] \times \mathbb{R}^3)}$ norm is finite. Therefore there exists a solution in the space $S^{1+c}([t_i,t_{i+1}] \times \mathbb{R}^3)$ on each succesive interval $[t_i,t_{i+1}]$ which implies that the $\|v\|_{L^{\infty}{H}^{1+c}}$ norm is bounded at each endpoint. This  means the ${S}^{1+c}$ norm is bounded on the next interval. Iterating this argument over each subinterval this implies the ${S}^{1+c}$ norm of the nonlinear solution $v(t)$ is bounded on the whole interval $[T_{\min},T_{\max}]$. In addition $\|v(T_{\min})\|_{H^{1+c}}$ and $\|v(T_{\max})\|_{{H}^{1+c}}$ are both finite. Therefore one can apply the local wellposedness theory to extend the solution beyond $[T_{\min},T_{\max}]$, which is a contradiction.

This concludes the proof of Theorem 1.2. It remains to prove Proposition 6.1

\textit{Proof of Proposition \ref{60}:} The nonlinear part of the solution $v$ satisfies the differential equation
\begin{equation} \label{eqn:61}
\begin{split}
iv_t + \Delta v &=  (v+z)|v+z|^4 \\
iv_t + \Delta v &=  v|v|^4 + f(v,z) \\
\end{split}
\end{equation}
for the function $f(v,z) =  (v+z)|v+z|^4 -v|v|^4 \lesssim |z|^5 + |z|\cdot |v|^4$.  

By the Strichartz estimates, (\ref{300}), we have the bound
\begin{equation}
\|v\|_{{S}^{1+c}([t_1,t_2]\times \mathbb{R}^3)} \lesssim \|v(t_1)\|_{{H}^{1+c}} + \|v|v|^4\|_{{N}^{1+c}([t_1,t_2]\times \mathbb{R}^3)} + \|f(v,z)\|_{{N}^{1+c}([t_1,t_2]\times \mathbb{R}^3)} .
\end{equation}

So we need to bound the two remaining terms. 
\begin{lemma} \label{lemma61} If $v$ is a solution to $(\ref{eqn:61})$ then $\|v|v|^4\|_{{N}^{1+c}([t_1,t_2]\times \mathbb{R}^3)} \lesssim \epsilon^4\|v\|_{{S}^{1+c}([t_1,t_2]\times \mathbb{R}^3)} $.
\end{lemma}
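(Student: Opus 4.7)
The plan is to reduce the estimate to a single Hölder inequality in Strichartz spaces after choosing a conveniently self-dual admissible pair, then to distribute the derivative via a fractional Leibniz rule. The nonlinearity $v|v|^4$ is a polynomial $v^3 \bar v^2$ of degree five in $v$ and $\bar v$, so a Kato--Ponce style product rule applies cleanly.

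First I would invoke the dual characterization of $N^{1+c}$ from Section 3: for any Schr\"odinger-admissible pair $(q,r)$,
\begin{equation*}
\|v|v|^4\|_{N^{1+c}([t_1,t_2]\times\mathbb{R}^3)} \lesssim \|\langle \nabla \rangle^{1+c}(v|v|^4)\|_{L^{q'}_t L^{r'}_x([t_1,t_2]\times\mathbb{R}^3)}.
\end{equation*}
The useful choice in $d=3$ is $(q,r) = (10/3,10/3)$, which satisfies $2/q + 3/r = 3/2$, so $q'=r'=10/7$. Writing $v|v|^4 = v^3 \bar v^2$ and applying the fractional Leibniz (Kato--Ponce) inequality iteratively places $\langle \nabla \rangle^{1+c}$ on exactly one factor. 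Hölder in space-time with four factors of $v$ in $L^{10}_tL^{10}_x$ and the derivative-carrying factor in $L^{10/3}_tL^{10/3}_x$ balances correctly since $\tfrac{1}{10/3} + \tfrac{4}{10} = \tfrac{3}{10}+\tfrac{4}{10} = \tfrac{7}{10} = \tfrac{1}{10/7}$ in both variables. This gives
\begin{equation*}
\|\langle \nabla \rangle^{1+c}(v|v|^4)\|_{L^{10/7}_t L^{10/7}_x} \lesssim \|\langle \nabla \rangle^{1+c} v\|_{L^{10/3}_t L^{10/3}_x} \, \|v\|^4_{L^{10}_t L^{10}_x}.
\end{equation*}

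To close, observe that $(10/3,10/3)$ is itself Schr\"odinger-admissible, so by the definition of the $S^{1+c}$ norm,
$\|\langle \nabla \rangle^{1+c} v\|_{L^{10/3}_t L^{10/3}_x} \le \|v\|_{S^{1+c}([t_1,t_2]\times\mathbb{R}^3)}$, and the hypothesis $\|v\|_{L^{10}_t L^{10}_x} < \epsilon$ yields $\epsilon^4$ from the remaining four factors, producing the claimed bound.

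The work is almost entirely bookkeeping; the only real constraint is the algebraic alignment of exponents, namely that the admissible pair chosen on the $N^{1+c}$ side, after Hölder with four copies of $L^{10}_{t,x}$, leaves behind a pair that is itself admissible so that $S^{1+c}$ can absorb the derivative-carrying factor. The self-dual pair $(10/3,10/3)$ is precisely what makes this work, and the condition $10/7 > 1$ ensures the fractional Leibniz rule is in its standard range. The complex-conjugate structure of $v|v|^4$ presents no difficulty once it is expanded as the genuine product $v^3\bar v^2$, so standard product-rule estimates apply term by term.
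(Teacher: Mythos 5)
Your proposal is correct and follows essentially the same route as the paper: dualizing $N^{1+c}$ with the self-dual admissible pair $(10/3,10/3)$, placing $\langle \nabla \rangle^{1+c}$ on one factor via the product rule, and applying H\"older with four copies of $\|v\|_{L^{10}_tL^{10}_x} < \epsilon$ and one factor absorbed into $\|v\|_{S^{1+c}}$. The only difference is that you make the Kato--Ponce step and the exponent bookkeeping explicit, which the paper leaves implicit.
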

Proof: Note that the pair $(\frac{10}{3},\frac{10}{3})$ is Schr\"odinger-admissible and has H\"older conjugate $(\frac{10}{7},\frac{10}{7})$. Therefore by equation (\ref{300}) we have
\begin{equation*}
\begin{split}
\|v\cdot |v|^4\|_{{N}^{1+c}([t_1,t_2]\times \mathbb{R}^3)} &\lesssim \|  \langle \nabla \rangle^{1+c}v \cdot |v|^4\|_{L^{10/7}L^{10/7}([t_1,t_2]\times \mathbb{R}^3)}\\
&\lesssim  \|  \langle \nabla \rangle^{1+c}v \|_{L^{10/3}L^{10/3}([t_1,t_2]\times \mathbb{R}^3)} \|v\|^4_{L^{10}L^{10}([t_1,t_2]\times \mathbb{R}^3)}\\
&\lesssim  \|v\|_{{S}^{1+c}([t_1,t_2]\times \mathbb{R}^3)} \|v\|^4_{L^{10}L^{10}([t_1,t_2]\times \mathbb{R}^3)}\\
&\lesssim  \epsilon^4\|v\|_{S^{1+c}([t_1,t_2]\times \mathbb{R}^3)}. \\
\end{split}
\end{equation*}

\begin{proposition} \label{61} Assume $0<c<\frac{1}{8}$, $f(v,z) = (v+z)|v+z|^4 - v|v|^4$ and that $z$ and $v$ satisfy the $R$ and $\epsilon$ bounds in the proposition, where $z$ is the linear solution and $v$ is the solution to (\ref{non}). Then we have
\begin{equation}
\begin{split}
\| f\|_{N^{1+c}([t_1,t_2]\times \mathbb{R}^3)} &\lesssim \|  \langle \nabla \rangle^{1+c} f\|_{L^{3/2}L^{18/13}([t_1,t_2]\times \mathbb{R}^3)}\\
 &\lesssim \sqrt{\epsilon^7R}( \sqrt{\epsilon R} + \|v_1(t_1)\|_{{H}^1(\mathbb{R}^3)}+\| \langle \nabla \rangle u^5\|_{L^{3/2}L^{18/13}([t_1,t_2]\times \mathbb{R}^3)}).\\
\end{split}
\end{equation}
\end{proposition}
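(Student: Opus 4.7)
My plan is to pass to the dual Strichartz estimate using the admissible pair $(q,r) = (3, 18/5)$ in $d=3$ (one verifies $\frac{2}{3} + \frac{3 \cdot 5}{18} = \frac{3}{2}$), whose Hölder dual is $(3/2,18/13)$; then it suffices to bound $\|\langle\nabla\rangle^{1+c}f\|_{L^{3/2}L^{18/13}([t_1,t_2]\times\mathbb{R}^3)}$. Expand $f = (v+z)|v+z|^4 - v|v|^4$ into a finite sum of degree-five monomials $w_1 w_2 w_3 w_4 w_5$ with each $w_i \in \{v, \bar v, z, \bar z\}$ and at least one $z$-factor present; then dyadically decompose each $w_i$ and, using the fractional Leibniz rule, place the weight $\langle\nabla\rangle^{1+c}$ on the factor of highest frequency. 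It remains to bound each resulting term.

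I would split into cases by the number of $z$-factors. When there are several $z$'s the problem is soft: a direct Hölder argument using the hypothesis $\|\langle\nabla\rangle^s z\|_{L^qL^r} < \epsilon$ for $(q,r) \in \{(10,10),(15/2,15/7),(30/7,15)\}$, together with Sobolev embedding (the condition $s > 7/8$ is precisely what gives enough embedding budget to convert these to $L^p$-bounds on $z$ itself), and $\|v\|_{L^{10}L^{10}} < \epsilon$, yields a pure $\epsilon$-product bound. The single factor of $R$ in the resulting $\sqrt{\epsilon^7 R}\cdot\sqrt{\epsilon R} = \epsilon^4 R$ term arises by using $\|z\|_{L^\infty H^s} \le \|\phi^\omega\|_{H^s} < R$ on the highest-frequency factor (where the derivative loss is most expensive, so an extra weight is unavoidable).

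The hard cases are the monomials with exactly one $z$ and four $v$'s, for which the $\epsilon$-budget from the single $z$ is insufficient to absorb the five-linear loss. For these I invoke the three-dimensional bilinear estimate Theorem \ref{visanbound}, pairing the $z$-factor $P_{N_1}z$ in $L^2$ with the highest-frequency $v$-factor $P_{N_2}v$ (which also carries the derivative), and absorbing the remaining three factors by Hölder at $L^{10}L^{10}$. Since $(i\partial_t+\Delta)z=0$, only $\|P_{N_1}z(t_1)\|_{L^2}\lesssim N_1^{-s}R$ survives from the $z$-side; from the $v$-side both $\|P_{N_2}v(t_1)\|_{L^2}$ and the source term $\|(i\partial_t+\Delta)P_{N_2}v\|_{L^{3/2}L^{18/13}} = \|P_{N_2}(u|u|^4)\|_{L^{3/2}L^{18/13}}$ contribute, producing the $\|v(t_1)\|_{H^1}$ and $\|\langle\nabla\rangle u^5\|_{L^{3/2}L^{18/13}}$ terms on the right-hand side after summation in $N_2$. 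The bilinear gain $N_1^{1-\delta}N_2^{-1/2+\delta}$ combined with the derivative cost $N_2^{1+c}$ on the paired factor and the $z$-decay $N_1^{-s}$ from the initial data yields a dyadic sum that converges exactly when $c < 1/8$ and $s > 7/8$.

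The square-root structure in the prefactor $\sqrt{\epsilon^7 R}$ reflects that the bilinear estimate returns a product of two paired-factor contributions, and a geometric mean is obtained by interpolating the bilinear bound against a pure-Hölder bound of $\epsilon^4 R$-type before summing. The main obstacle I expect is the bookkeeping in this bilinear-plus-Hölder decomposition of the one-$z$ monomials: the quintic problem in $d=3$ is energy critical ($s_c = 1$), so every exponent in the dyadic sum sits on the boundary of convergence, and the hypothesized parameter window $\frac{7}{8} < s < 1$, $0 < c < \frac{1}{8}$ leaves essentially no slack. A secondary technical point is the fractional Leibniz rule for $\langle\nabla\rangle^{1+c}$ of a five-fold product at this low regularity, which is standard but must be applied with the correct tail estimates to ensure no term is lost.
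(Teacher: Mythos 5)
Your skeleton matches the paper's: reduce via the dual of the admissible pair $(3,18/5)$ to bounding $\|\langle\nabla\rangle^{1+c}f\|_{L^{3/2}L^{18/13}}$, expand $f$ into quintic monomials each containing a $z$, decompose dyadically with the derivative on the top frequency, and feed Theorem \ref{visanbound} with $\|P_N z(t_1)\|_{L^2}\lesssim N^{-s}R$ so that the $v$-side of the bilinear estimate produces the $\|v(t_1)\|_{H^1}$ and $\|\langle\nabla\rangle u^5\|_{L^{3/2}L^{18/13}}$ terms. The genuine gap is in your case division. You split by the number of $z$-factors and declare the multi-$z$ monomials ``soft,'' to be handled by H\"older alone using the $\epsilon$-bounds on $\langle\nabla\rangle^{s}z$ and an $L^\infty H^s\le R$ bound on the top factor. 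That cannot close whenever the highest-frequency factor carrying $\langle\nabla\rangle^{1+c}$ is a $z$: since $s<1<1+c$, that factor costs $N_{5}^{1+c-s}$ with a strictly positive exponent, and in a pure H\"older bound nothing supplies a compensating negative power of $N_5$ (the other factors may sit at frequency $O(1)$, and the dual factor $w_6$ is not decomposed), so the dyadic sum over $N_5$ diverges. This configuration arises for any number of $z$'s, including your ``hard'' one-$z$ monomial when that single $z$ has the top frequency, and it is precisely the paper's Case 1: there the high-frequency $z_5$ is paired with the lowest-frequency factor in the bilinear estimate interpolated (weights $\tfrac12,\tfrac12$) with a H\"older bound (Lemmas \ref{lemma62} and \ref{lemma64}), producing the gain $N_5^{-1/4-s+\delta/2}$ and the total exponent $3/4+c-s+\delta/2<0$ — this is exactly where $s>\tfrac78$, $c<\tfrac18$ is consumed. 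Your bilinear treatment is only described for the opposite configuration (derivative on a top-frequency $v$, the paper's Case 2 via Lemma \ref{lemma63}), so the dangerous half of the analysis is missing.

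A secondary point: the interpolation between the bilinear $L^2_{t,x}$ bound and the H\"older bound is not just the cosmetic source of the $\sqrt{\epsilon R}$ geometric means, as you present it; it is needed for the six-fold H\"older application itself. With the pair placed in $L^2_{t,x}$, three factors in $L^{10}L^{10}$ and $w_6$ in $L^{3}L^{18/5}$, the reciprocals of the time exponents already sum to $\tfrac12+\tfrac3{10}+\tfrac13>1$, so that step is not a legitimate H\"older estimate and the finite length of $[t_1,t_2]$ does not repair a deficit in that direction. The interpolated pairing norm (the paper's $L^{30/11}L^{15/8}$) is chosen so the time exponents close, and after interpolation the frequency gains are halved, which changes the convergence arithmetic (the derivative-on-$v$ case needs $c<\tfrac14$, the derivative-on-$z$ case needs $s>\tfrac34+c$); it is through this corrected bookkeeping, not your un-interpolated count, that the stated window $\tfrac78<s<1$, $0<c<\tfrac18$ actually enters.
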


Proof: Observing that $(3,\frac{18}{5})$ is Schr\"odinger-admissible, we have

\begin{equation}
\| f\|_{N^{1+c}([t_1,t_2]\times \mathbb{R}^3)} \le \|f\|_{L^{3/2}{W}^{1+c,18/13}([t_1,t_2]\times \mathbb{R}^3)} \lesssim \sup_{\|w_6\|_{L^{3}L^{18/5}([t_1,t_2]\times \mathbb{R}^3)} \le 1} \int_{t_1}^{t_2}\int_{x}  \langle \nabla \rangle^{1+c} [f] w_6 dw.
\end{equation}

The function $f(v,z)$ is a sum of terms of the form $w_1w_2w_3w_4z_5$ where each $w_i$ is either a $v$ or $z$ term. We  dyadicaly decompose these first five terms (not $w_6$), refer to $P_{N_i}w_i$ as $w_i$, and sum over all frequencies $N_1-N_5$, and combinations of $v,z$ in integrals of the form 
\begin{equation}
\| \langle \nabla \rangle^{1+c} f\|_{L^{3/2}L^{18/13}([t_1,t_2]\times \mathbb{R}^3)} \lesssim \sup_{\|w_6\|_{L^{3}L^{18/5}([t_1,t_2]\times \mathbb{R}^3)} \le 1}\int_{t_1}^{t_2}\int_{x}  \langle \nabla \rangle^{1+c} [w_1w_2w_3w_4v_5] w_6 dw.
\end{equation}
We can assume that the $1+c$ derivatives fall on the term with highest frequency. Before going through cases, we prove the following lemmas that combine interpolation with the bilinear estimate, Lemma \ref{visanbound}.

\begin{lemma} \label{lemma62} If $N_1 \le N_2$, then for any pair of dyadic components $v_1 = P_{N_1}v$, $z_2 = P_{n_2}z$ we have the bound: 
\begin{equation}
\begin{split}
\|v_1z_5\|_{L^{30/11}L^{15/8}([t_1,t_2] \times \mathbb{R}^3)} &\lesssim  N_2^{-1/4-s+\delta/2}\|v_1\|^{1/2}_{L^{10}L^{10}([t_1,t_2]\times \mathbb{R}^3)}\\
&\times(\|v_1(t_1)\|_{{H}^1(\mathbb{R}^3)}+\|  \langle \nabla \rangle u^5\|_{L^{3/2}L^{18/13}([t_1,t_2]\times \mathbb{R}^3)})^{1/2}\sqrt{\epsilon R} \\
\end{split}
\end{equation}
\end{lemma}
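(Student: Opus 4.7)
The plan is to obtain the target $L^{30/11}_tL^{15/8}_x$ norm by interpolating between two bilinear estimates with equal weights: an $L^2L^2$ estimate coming from Theorem~\ref{visanbound}, and an $L^{30/7}_tL^{30/17}_x$ H\"older bound that uses the hypotheses on $v$ and on $z$. One checks $\frac{11}{30}=\frac{1}{2}\cdot\frac{1}{2}+\frac{1}{2}\cdot\frac{7}{30}$ in time and $\frac{8}{15}=\frac{1}{2}\cdot\frac{1}{2}+\frac{1}{2}\cdot\frac{17}{30}$ in space, so such an interpolation is arithmetically consistent and the square-root powers in the statement are precisely what drops out.

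First, I apply Theorem~\ref{visanbound} in dimension $d=3$ with $u_1=P_{N_1}v$ and $u_2=P_{N_2}z$. Since $v$ solves $iv_t+\Delta v=\pm u|u|^4$ with $u=v+z$, and since $(i\partial_t+\Delta)z=0$, the factors from Visan's estimate read $\|P_{N_1}v(t_1)\|_{L^2}+\|P_{N_1}(u|u|^4)\|_{L^{3/2}L^{18/13}}$ for $u_1$ and $\|P_{N_2}z(t_1)\|_{L^2}$ for $u_2$. Using Bernstein in the form $\|P_{N_1}G\|_{L^p}\lesssim N_1^{-1}\|\langle\nabla\rangle G\|_{L^p}$ on both terms of the first factor, and $\|P_{N_2}z(t_1)\|_{L^2}\lesssim N_2^{-s}\|z(t_1)\|_{H^s}\le N_2^{-s}R$ on the second, the prefactor $N_1^{1-\delta}$ from Visan is absorbed by the $N_1^{-1}$ from Bernstein, yielding
\begin{equation*}
\|v_1 z_5\|_{L^2L^2}\lesssim N_1^{-\delta}N_2^{-1/2-s+\delta}\bigl(\|v_1(t_1)\|_{H^1}+\|\langle\nabla\rangle u^5\|_{L^{3/2}L^{18/13}}\bigr)R.
\end{equation*}

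Next, a direct H\"older inequality gives $\|v_1 z_5\|_{L^{30/7}L^{30/17}}\le\|v_1\|_{L^{10}L^{10}}\|z_5\|_{L^{15/2}L^{15/7}}$, and the Littlewood--Paley bound $\|z_5\|_{L^{15/2}L^{15/7}}\lesssim N_2^{-s}\|\langle\nabla\rangle^s z\|_{L^{15/2}L^{15/7}}\lesssim N_2^{-s}\epsilon$ uses exactly the $(q,r)=(15/2,15/7)$ pair listed in the hypothesis of Proposition~\ref{60}. Combining the two estimates via
\begin{equation*}
\|v_1 z_5\|_{L^{30/11}L^{15/8}}\le\|v_1z_5\|^{1/2}_{L^{30/7}L^{30/17}}\|v_1z_5\|^{1/2}_{L^2L^2},
\end{equation*}
the $N_2$ exponent adds to $-s/2-1/4-s/2+\delta/2=-s-1/4+\delta/2$, the constants $\epsilon$ and $R$ appear with square roots, and one discards the harmless $N_1^{-\delta/2}\le 1$.

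The only real subtlety is ensuring that the Bernstein reduction actually consumes all of the $N_1^{1-\delta}$ coming from Theorem~\ref{visanbound}; this is precisely why the right-hand side carries $\|v_1(t_1)\|_{H^1}$ and the $\langle\nabla\rangle$-weighted norm of $u^5$ rather than the unweighted $L^2$ and $L^{3/2}L^{18/13}$ quantities that Visan's estimate natively sees. Once this trade-off is set up correctly, the rest is careful bookkeeping of Schr\"odinger-admissible exponents.
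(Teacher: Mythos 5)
Your proposal is correct and follows essentially the same route as the paper: the $L^2_{t,x}$ bilinear bound from Theorem~\ref{visanbound} (with the derivative/frequency bookkeeping trading $N_1^{1-\delta}$ for $H^1$ and $\langle\nabla\rangle$-weighted norms, and $\|P_{N_2}z(t_1)\|_{L^2}\lesssim N_2^{-s}R$), the H\"older bound in $L^{30/7}_tL^{30/17}_x$ using the $(15/2,15/7)$ hypothesis on $z$, and interpolation with weights $\tfrac12,\tfrac12$. Your added details (vanishing of the inhomogeneous factor for $z$ and the explicit Bernstein step) only make explicit what the paper leaves implicit.
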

Proof: 

First note that for $N_1 \le N_2$ we have the bilinear estimate \ref{visanbound}:
\begin{equation}
\begin{split}
\|v_1z_2\|_{L^2L^2([t_1,t_2]\times \mathbb{R}^3)} &\lesssim N_2^{-1/2+\delta}N_1^{1-\delta}(\|v_1(t_1)\|_{L^2(\mathbb{R}^3)}+\|u^5\|_{L^{3/2}L^{18/13}([t_1,t_2]\times \mathbb{R}^3)})\|z(t_1)\|_{L^2(\mathbb{R}^3)}\\
\|v_1z_2\|_{L^2L^2([t_1,t_2]\times \mathbb{R}^3)} &\lesssim N_2^{-1/2-s+\delta}(\|v_1(t_1)\|_{{H}^1(\mathbb{R}^3)}+\|  \langle \nabla \rangle u^5\|_{L^{3/2}L^{18/13}([t_1,t_2]\times \mathbb{R}^3)})R.\\
\end{split}
\end{equation}
Also,  by H\"older's inequality
\begin{equation}
\begin{split}
\|v_1z_2\|_{L^{30/7}L^{30/17}([t_1,t_2]\times \mathbb{R}^3)} &\le \|v_1\|_{L^{10}L^{10}([t_1,t_2]\times \mathbb{R}^3)} \|z_2\|_{L^{30/4}L^{30/14}([t_1,t_2]\times \mathbb{R}^3)}\\
&\le N_2^{-s}\|v_1\|_{L^{10}L^{10}([t_1,t_2]\times \mathbb{R}^3)}\epsilon.\\
\end{split}
\end{equation}
Now note that 
\begin{equation}
\begin{split}
\frac{1}{30/11} &= \frac{1/2}{2} + \frac{1/2}{30/7}   \\
\frac{1}{15/8} &= \frac{1/2}{2} + \frac{1/2}{30/17} .\\
\end{split}
\end{equation}
Interpolating with exponents $\frac{1}{2}, \frac{1}{2}$ yields
\begin{equation}
\begin{split}
\|v_1z_5\|_{L^{30/11}L^{15/8}([t_1,t_2] \times \mathbb{R}^3)}  &\lesssim N_2^{-1/4-s+\delta/2}\|v_1\|^{1/2}_{L^{10}L^{10}([t_1,t_2]\times \mathbb{R}^3)}\\
&\times (\|v_1(t_1)\|_{{H}^1(\mathbb{R}^3)}+\|  \langle \nabla \rangle u^5\|_{L^{3/2}L^{18/13}([t_1,t_2]\times \mathbb{R}^3)})^{1/2}\sqrt{\epsilon R}.\\
\end{split}
\end{equation}

\begin{lemma} \label{lemma63}
If $N_2 \le N_1$ then for any pair of dyadic components $v_1 = P_{N_1}v$, $z_2 = P_{n_2}z$ we have the bound 
\begin{equation}
\|v_1z_2\|_{L^{30/11}L^{15/8}([t_1,t_2] \times \mathbb{R}^3)} \lesssim   N_1^{-3/4+\delta/2}N_2^{1/2-s}(\|v_1(t_1)\|_{{H}^1(\mathbb{R}^3)}+\|  \langle \nabla \rangle u^5\|_{L^{3/2}L^{18/13}([t_1,t_2]\times \mathbb{R}^3)})\sqrt{\epsilon R} .
\end{equation}
\end{lemma}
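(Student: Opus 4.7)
The strategy is the direct mirror of Lemma \ref{lemma62}, with the roles of $N_1$ and $N_2$ swapped since the lower frequency now sits on the $z$ factor. I will produce two complementary estimates on $\|v_1 z_2\|$, one in $L^2_{t,x}$ via the bilinear inequality of Theorem \ref{visanbound} and one in $L^{30/7}_t L^{30/17}_x$ via H\"older plus Strichartz, and then interpolate with weights $(1/2,1/2)$, which lands exactly at the target $L^{30/11}_t L^{15/8}_x$.

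For the $L^2$ bound I apply Theorem \ref{visanbound} with $N_2$ as the smaller frequency in $d=3$, so $(d-1)/2=1$. Since $z$ is a free Schr\"odinger solution its nonlinear source vanishes, while $v_1$ carries source $P_{N_1}(u^5)$. Bernstein converts $\|z_2(t_1)\|_{L^2}\lesssim N_2^{-s}\|z(t_1)\|_{H^s}\lesssim N_2^{-s}R$, and enlarging the rightmost factor from $L^2$, $P_{N_1}$ to $H^1$, $\langle\nabla\rangle$ gives
\begin{equation*}
\|v_1 z_2\|_{L^2_{t,x}} \lesssim N_2^{1-s-\delta}\,N_1^{-1/2+\delta}\,R\,Q,\qquad Q:=\|v_1(t_1)\|_{H^1}+\|\langle\nabla\rangle u^5\|_{L^{3/2}L^{18/13}}.
\end{equation*}

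For the $L^{30/7}L^{30/17}$ bound I use the H\"older split
\begin{equation*}
\|v_1 z_2\|_{L^{30/7}L^{30/17}} \le \|v_1\|_{L^\infty L^2}\,\|z_2\|_{L^{30/7}L^{15}},
\end{equation*}
which is numerically consistent since $\tfrac{1}{\infty}+\tfrac{7}{30}=\tfrac{7}{30}$ and $\tfrac{1}{2}+\tfrac{1}{15}=\tfrac{17}{30}$. The Schr\"odinger-admissible pair $(\infty,2)$ applied to $P_{N_1}v$, together with Bernstein on the frequency-localized initial data and nonlinear source, yields $\|v_1\|_{L^\infty L^2}\lesssim N_1^{-1}Q$. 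The hypothesis $\|\langle\nabla\rangle^s z\|_{L^{30/7}L^{15}}<\epsilon$ (one of the three triples assumed in Proposition \ref{60}) paired with Bernstein yields $\|z_2\|_{L^{30/7}L^{15}}\lesssim N_2^{-s}\epsilon$, and so $\|v_1 z_2\|_{L^{30/7}L^{30/17}}\lesssim N_1^{-1}\,N_2^{-s}\,\epsilon\,Q$.

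Interpolating the two bounds with equal weights produces $L^{30/11}_t L^{15/8}_x$ on the left because $\tfrac{1}{2}(\tfrac{1}{2}+\tfrac{7}{30})=\tfrac{11}{30}$ and $\tfrac{1}{2}(\tfrac{1}{2}+\tfrac{17}{30})=\tfrac{16}{30}$. On the right the $N_1$-exponent becomes $\tfrac{1}{2}(-\tfrac{1}{2}+\delta)+\tfrac{1}{2}(-1)=-\tfrac{3}{4}+\tfrac{\delta}{2}$, the $N_2$-exponent becomes $\tfrac{1}{2}(1-s-\delta)+\tfrac{1}{2}(-s)=\tfrac{1}{2}-s-\tfrac{\delta}{2}$, and the remaining factors combine as $\sqrt{R\epsilon}\,Q$, which matches the claim after absorbing the harmless $N_2^{-\delta/2}$. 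The only real design choice is the H\"older pairing in the second bound: among the three allowed $z$-triples from Proposition \ref{60}, only $(30/7,15)$ leaves a Schr\"odinger-admissible complement $(\infty,2)$ for $v_1$, so this is the pairing that simultaneously controls $v_1$ through $Q$ and $z_2$ through the $\epsilon$-hypothesis while interpolating to the target Lebesgue exponents; getting these arithmetic constraints to close is the main obstacle.
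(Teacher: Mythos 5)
Your proposal is correct and follows essentially the same route as the paper: a bilinear $L^2_{t,x}$ estimate from Theorem \ref{visanbound}, a H\"older bound pairing $\|v_1\|_{L^\infty L^2}$ (via Bernstein and the Strichartz estimate) with the assumed $(30/7,15)$ control of $z$, and then $(1/2,1/2)$ interpolation to land in $L^{30/11}_tL^{15/8}_x$. The only cosmetic difference is that the paper also applies Bernstein to the $v_1$ data and forcing inside the bilinear step (gaining an extra $N_1^{-1}$ there), which only strengthens the intermediate bound; your version already yields exactly the stated exponents $N_1^{-3/4+\delta/2}N_2^{1/2-s}$.
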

Proof: The bilinear estimate \ref{visanbound} tells us: 
\begin{equation}
\begin{split}
\|v_1z_2\|_{L^2L^2([t_1,t_2]\times \mathbb{R}^3)} &\lesssim N_1^{-1/2+\delta}N_2^{1-\delta}(\|v_1(t_1)\|_{L^2(\mathbb{R}^3)}+\|  \langle \nabla \rangle u^5\|_{L^{3/2}L^{18/13}([t_1,t_2]\times \mathbb{R}^3)})\|z(t_1)\|_{L^2(\mathbb{R}^3)}\\
\|v_1z_2\|_{L^2L^2([t_1,t_2]\times \mathbb{R}^3)} &\le N_1^{-3/2+\delta}N_2^{1-s}(\|v_1\|_{{H}^1(\mathbb{R}^3)}+\| \langle \nabla \rangle u^5\|_{L^{3/2}L^{18/13}([t_1,t_2]\times \mathbb{R}^3)})R . \\
\end{split}
\end{equation}

Also, by H\"older's inequality we have:
\begin{equation}
\begin{split}
\|v_1z_2\|_{L^{30/7}L^{30/17}([t_1,t_2]\times \mathbb{R}^3)} &\lesssim \|v_1\|_{L^{\infty}L^{2}([t_1,t_2]\times \mathbb{R}^3)} \|z_2\|_{L^{30/7}L^{15}([t_1,t_2]\times \mathbb{R}^3)}\\
&\lesssim N_1^{-1}N_2^{-s}\|v_1\|_{{S}^1([t_1,t_2]\times \mathbb{R}^3)}\epsilon .\\
\end{split}
\end{equation}

So with exponents $\frac{1}{2}, \frac{1}{2}$ we interpolate between the $L^2L^2$ and $L^{30/7}L^{30/17}$ bounds, and apply the Strichartz estimate to get:
\begin{equation}
\|v_1z_2\|_{L^{30/11}L^{15/8}([t_1,t_2] \times \mathbb{R}^3)} \lesssim N_1^{-3/4+\delta/2}N_2^{1/2-s}(\|v_1(t_1)\|_{{H}^1(\mathbb{R}^3)}+\| \langle \nabla \rangle u^5\|_{L^{3/2}L^{18/13}([t_1,t_2]\times \mathbb{R}^3)}) \sqrt{\epsilon R} .
\end{equation}

\begin{lemma} \label{lemma64}
If $N_1 \le N_2$ then for $z_1 = P_{N_1}z$, $z_2 = P_{n_2}z$ we have 
\begin{equation}
\|z_1z_2\|_{L^{30/11}L^{15/8}([t_1,t_2] \times \mathbb{R}^3)} \le N_2^{-1/4-s+\delta/2}N_1^{1/2-s} R\epsilon.
\end{equation}
\end{lemma}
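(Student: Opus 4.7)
My plan is to mirror the arguments of Lemmas \ref{lemma62} and \ref{lemma63}: produce an $L^2L^2$ bound on $z_1z_2$ from the bilinear Strichartz estimate of Theorem \ref{visanbound}, produce an $L^{30/7}L^{30/17}$ bound from H\"older's inequality combined with the probabilistic estimates in the hypothesis of Proposition \ref{60}, and then interpolate with weights $(1/2,1/2)$ to reach the $L^{30/11}L^{15/8}$ norm. The two new conveniences in this lemma, compared to the mixed $vz$ cases, are that both factors are linear solutions and that both can be handed to the probabilistic hypothesis on their side.

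For the bilinear piece I exploit the fact that since both factors are linear solutions, $(i\partial_t+\Delta)z_i=0$ and the forcing terms in Theorem \ref{visanbound} vanish outright. Combining this with the Bernstein-type estimate $\|P_{N_i}\phi^\omega\|_{L^2}\lesssim N_i^{-s}\|\phi^\omega\|_{H^s}\le N_i^{-s}R$ yields
\begin{equation*}
\|z_1z_2\|_{L^2L^2([t_1,t_2]\times\mathbb{R}^3)}\lesssim N_1^{1-\delta}N_2^{-1/2+\delta}\|P_{N_1}\phi^\omega\|_{L^2}\|P_{N_2}\phi^\omega\|_{L^2}\lesssim N_1^{1-s-\delta}N_2^{-1/2-s+\delta}R^2.
\end{equation*}

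For the H\"older piece I pair $z_1$ with the $(10,10)$ probabilistic bound and $z_2$ with the $(15/2,15/7)$ one; a quick check gives $\tfrac{1}{10}+\tfrac{2}{15}=\tfrac{7}{30}$ and $\tfrac{1}{10}+\tfrac{7}{15}=\tfrac{17}{30}$, so
\begin{equation*}
\|z_1z_2\|_{L^{30/7}L^{30/17}([t_1,t_2]\times\mathbb{R}^3)}\lesssim \|z_1\|_{L^{10}L^{10}}\|z_2\|_{L^{15/2}L^{15/7}}\lesssim N_1^{-s}N_2^{-s}\epsilon^2,
\end{equation*}
where the $N_i^{-s}$ factors come from $\|P_{N_i}z\|_{L^qL^r}\lesssim N_i^{-s}\|\langle\nabla\rangle^s P_{N_i}z\|_{L^qL^r}$, and both pairs $(10,10)$ and $(15/2,15/7)$ are among the list of pairs assumed in Proposition \ref{60}.

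Finally, interpolation between these two endpoints with parameter $\theta=1/2$ lands us in $L^{30/11}L^{15/8}$, as $\tfrac{1/2}{2}+\tfrac{1/2}{30/7}=\tfrac{11}{30}$ and $\tfrac{1/2}{2}+\tfrac{1/2}{30/17}=\tfrac{8}{15}$, with constant $\sqrt{R^2\cdot\epsilon^2}=R\epsilon$ and frequency exponent $N_1^{1/2-s-\delta/2}N_2^{-1/4-s+\delta/2}$. Since $s>1/2$ in our range the $N_1$ exponent is negative, so $N_1^{1/2-s-\delta/2}\le N_1^{1/2-s}$ for $N_1\ge 1$, matching the claimed bound. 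Nothing here is conceptually new beyond Lemmas \ref{lemma62} and \ref{lemma63}; the only thing to watch is the H\"older bookkeeping, in particular confirming that both factor pairs really are among the probabilistic hypotheses of Proposition \ref{60}, which they are.
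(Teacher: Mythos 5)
Your proposal is correct and follows essentially the same route as the paper, which simply declares the proof identical to that of Lemma \ref{lemma62} with $v_1$ replaced by $z_1$ placed in $L^{10}L^{10}$: the bilinear estimate of Theorem \ref{visanbound} (with both forcing terms vanishing since $z$ is a linear solution and $\|P_{N_i}\phi^\omega\|_{L^2}\lesssim N_i^{-s}R$), the H\"older bound using the $(10,10)$ and $(15/2,15/7)$ probabilistic hypotheses, and the $(1/2,1/2)$ interpolation are exactly the intended computation. The extra factor $N_1^{-\delta/2}$ you discard for $N_1\ge 1$ is harmless, so your bound matches the stated one.
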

Proof: The proof is identical to that of Lemma \ref{lemma62} except that $v_1$ has been replaced with $z_1$, which is put in a $L^{10}L^{10}$ norm.

In analyzing terms of the form $w_1w_2w_3w_4v_5$ there are two cases for where the highest 
\newline frequencies occur:
\begin{itemize}
\item{Case 1:} The highest frequency is on a $z$ term.

\item{Case 2:} The highest frequency is on a $v$ term. 
\end{itemize}

Throughout these cases we will utilize the facts that $\|v\|_{L^{q}L^{r}} \lesssim \|v\|_{{S}^{1}}$ for $\frac{2}{q}+\frac{3}{r} = \frac{1}{2}$ and $\|v\|_{L^{q}L^r} \lesssim \|v\|_{{S}^{0}}$ for $(q,r)$ Schr\"odinger admissible. We will also use the three above lemmas. 
Now we begin the analysis of cases. 

\begin{enumerate}

\item{Case 1:} In this case the highest frequency is on $z_5$.
We have all the derivatives falling on $z_5$. 
\begin{itemize}
\item{1.a $v_1w_2w_3w_4z_5$ case:}

Applying H\"older's inequality, Lemma 6.4, and our assumptions about $\epsilon$ we have:
\begin{equation}
\begin{split}
I &= \int_{t_1}^{t_2}\int_{x} v_1w_2w_3w_4   \langle \nabla \rangle^{1+c} z_5 w_6 dw\\
&\le N_5^{1+c}\|w_2\|_{L^{10}L^{10}([t_1,t_2]\times \mathbb{R}^3)}\|w_3\|_{L^{10}L^{10}([t_1,t_2]\times \mathbb{R}^3)}\\
&\times\|w_4\|_{L^{10}L^{10}([t_1,t_2]\times \mathbb{R}^3)} \|v_1z_5\|_{L^{30/11}L^{15/8}([t_1,t_2] \times \mathbb{R}^3)}   \|w_6\|_{L^{3}L^{18/5}}\\
&\le N_5^{3/4+c-s+\delta/2} \|w_2\|_{L^{10}L^{10}([t_1,t_2]\times \mathbb{R}^3)}\|w_3\|_{L^{10}L^{10}([t_1,t_2]\times \mathbb{R}^3)}\|w_4\|_{L^{10}L^{10}([t_1,t_2]\times \mathbb{R}^3)}\\
&\times \|v_1\|^{1/2}_{L^{10}L^{10}([t_1,t_2]\times \mathbb{R}^3)}(\|v_1(t_1)\|_{{H}^1(\mathbb{R}^3)}+\| \langle \nabla \rangle u^5\|_{L^{3/2}L^{18/13}([t_1,t_2]\times \mathbb{R}^3)})^{1/2}\sqrt{\epsilon R}\|w_6\|_{L^{3}L^{18/5}}.\\
\end{split}
\end{equation}

For $c < \frac{1}{8}$, $s > \frac{7}{8}$ and $\delta =0+$ the power of $N_5$ is negative, and the sum converges. The $w_i$ terms are all bounded by $\|v_i\|_{L^{10}L^{10}([t_1,t_2]\times \mathbb{R}^3)}$ or $\epsilon$. So this is bounded by $\epsilon^{4}R^{1/2}(\|v_1(t_1)\|_{{H}^1(\mathbb{R}^3)}+\| \langle \nabla \rangle u^5\|_{L^{3/2}L^{18/13}([t_1,t_2]\times \mathbb{R}^3)})^{1/2}$.

\item{1.b $z_1z_2z_3z_4z_5$ case:}

Applying H\"older's inequality, Lemma 6.6, and our $\epsilon$ bounds we have:
\begin{equation}
\begin{split}
I &= \int_{t_1}^{t_2}\int_{x} z_1z_2z_3z_4  \langle \nabla \rangle^{1+c} z_5 w_6 dw\\
&\le N_5^{1+c}\|z_2\|_{L^{10}L^{10}([t_1,t_2]\times \mathbb{R}^3)}\|z_3\|_{L^{10}L^{10}([t_1,t_2]\times \mathbb{R}^3)}\\
&\times\|z_4\|_{L^{10}L^{10}([t_1,t_2]\times \mathbb{R}^3)} \|z_1z_5\|_{L^{30/11}L^{15/8}([t_1,t_2] \times \mathbb{R}^3)}   \|w_6\|_{L^{3}L^{18/5}}\\
&\le  N_1^{1/2-s} N_5^{3/4-s+c + \delta}\|z_2\|_{L^{10}L^{10}}\|z_3\|_{L^{10}L^{10}}\|z_4\|_{L^{10}L^{10}} \epsilon R \|w_6\|_{L^{3}L^{18/5}}.\\
\end{split}
\end{equation}
For $s > \frac{7}{8}$ and $c < \frac{1}{8}$ and $\delta =0+$ both powers are negative and this is bounded by $\epsilon^{4}R$.

\end{itemize}
\item{Case 2:}

In this case the highest frequency falls on $v$, meaning $N_1 \ge N_2,\ldots, N_5$. We have, applying H\"older's inequality, Lemma 6.5, and our $\epsilon$ bounds, for $N_5 \le N_1$:

\begin{equation}
\begin{split}
I &= \int_{t_1}^{t_2}\int_{x}  \langle \nabla \rangle^{1+c}v_1w_2w_3w_4 z_5 w_6 dw\\
&\le N_1^{1+c}\|w_2\|_{L^{10}L^{10}([t_1,t_2]\times \mathbb{R}^3)}\|w_3\|_{L^{10}L^{10}([t_1,t_2]\times \mathbb{R}^3)}\\
&\times\|w_4\|_{L^{10}L^{10}([t_1,t_2]\times \mathbb{R}^3)} \|v_1z_5\|_{L^{30/11}L^{15/8}([t_1,t_2] \times \mathbb{R}^3)}   \|w_6\|_{L^{3}L^{18/5}}\\
&\le N_1^{-1/4+c + \delta/2} N_5^{1/2-s}\|w_2\|_{L^{10}L^{10}([t_1,t_2]\times \mathbb{R}^3)}\|w_3\|_{L^{10}L^{10}([t_1,t_2]\times \mathbb{R}^3)}\|w_4\|_{L^{10}L^{10}([t_1,t_2]\times \mathbb{R}^3)}\\
&\times  (\|v_1(t_1)\|_{{H}^1(\mathbb{R}^3)}+\| \langle \nabla \rangle u^5\|_{L^{3/2}L^{18/13}([t_1,t_2]\times \mathbb{R}^3)}) \sqrt{\epsilon R}\|w_6\|_{L^{3}L^{18/5}}.\\
\end{split}
\end{equation}

As in case 1a, the $\|w_i\|_{L^{10}L^{10}([t_1,t_2]\times \mathbb{R}^3)}$ terms are all bounded by 
$\epsilon$ or $\|v\|_{L^{10}L^{10}([t_1,t_2]\times \mathbb{R}^3)}$. Therefore for $c<\frac{1}{8}$ the sum over frequencies is bounded by \newline $\sqrt{\epsilon^7R}(\|v_1(t_1)\|_{{H}^1(\mathbb{R}^3)}+\| \langle \nabla \rangle u^5\|_{L^{3/2}L^{18/13}([t_1,t_2]\times \mathbb{R}^3)})$.

\end{enumerate}
So in all cases the integral is bounded by 
\begin{equation*}
\sqrt{\epsilon^7R}( \sqrt{\epsilon R} + \|v_1(t_1)\|_{{H}^1(\mathbb{R}^3)}+\| \langle \nabla \rangle u^5\|_{L^{3/2}L^{18/13}([t_1,t_2]\times \mathbb{R}^3)}).
\end{equation*}  This completes the proof of Proposition 6.3.

So combining Lemma \ref{lemma61}, Proposition \ref{61} and the fact that $\epsilon \ll 1$ we arrive at the following pair of inequalities: 

\begin{equation*}
\begin{split}
\|v\|_{{S}^{1+c}([t_1,t_2]\times \mathbb{R}^3)} &\lesssim \|v(t_1)\|_{{H}^{1+c}} + \|  \langle \nabla \rangle^{1+c}f\|_{L^{3/2}L^{18/13}([t_1,t_2]\times \mathbb{R}^3)} \\
\|  \langle \nabla \rangle^{1+c}f\|_{L^{3/2}L^{18/13}([t_1,t_2]\times \mathbb{R}^3)} &\lesssim  \sqrt{\epsilon^7R}( \sqrt{\epsilon R} + \|v_1(t_1)\|_{{H}^1(\mathbb{R}^3)}+\| \langle \nabla \rangle u^5\|_{L^{3/2}L^{18/13}([t_1,t_2]\times \mathbb{R}^3)}).\\
\end{split}
\end{equation*}

So all that remains is to bound 
\begin{equation} \label{79}
\| \langle \nabla \rangle u^5\|_{L^{3/2}L^{18/13}([t_1,t_2]\times \mathbb{R}^3)} \le \|  \langle \nabla \rangle v^5\|_{L^{3/2}L^{18/13}([t_1,t_2]\times \mathbb{R}^3)}+\|  \langle \nabla \rangle f\|_{L^{3/2}L^{18/13}([t_1,t_2]\times \mathbb{R}^3)}.
\end{equation}

First observe that 
\begin{equation*}
\begin{split}
\|  \langle \nabla \rangle v^5\|_{L^{3/2}L^{18/13}([t_1,t_2]\times \mathbb{R}^3)} &\le \| \langle \nabla \rangle v \cdot v^4\|_{L^{3/2}L^{18/13}([t_1,t_2]\times \mathbb{R}^3)}\\
&\le \| \langle \nabla \rangle v\|_{L^{15/4}L^{90/29}([t_1,t_2]\times \mathbb{R}^3)}\|v\|^4_{L^{10}L^{10}([t_1,t_2]\times \mathbb{R}^3)}\\
&\le \|v\|_{{S}^1([t_1,t_2]\times \mathbb{R}^3)} \|v\|^4_{L^{10}L^{10}([t_1,t_2]\times \mathbb{R}^3)}\\
&\lesssim \|  \langle \nabla \rangle v^5\|_{L^{3/2}L^{18/13}([t_1,t_2]\times \mathbb{R}^3)}\|v\|^4_{L^{10}L^{10}([t_1,t_2]\times \mathbb{R}^3)} \\
&+ (\|v(t_1)\|_{{H}^1(\mathbb{R}^3)}+\|f\|_{{N}^1([t_1,t_2]\times \mathbb{R}^3)})\|v\|^4_{L^{10}L^{10}([t_1,t_2]\times \mathbb{R}^3)}
\end{split}
\end{equation*}
and for $\|v\|_{L^{10}L^{10}([t_1,t_2]\times \mathbb{R}^3)}$ less than $\epsilon$ we have
\begin{equation} \label{80}
\|  \langle \nabla \rangle v^5\|_{L^{3/2}L^{18/13}([t_1,t_2]\times \mathbb{R}^3)}  \lesssim\epsilon^4(\|v(t_1)\|_{{H}^1(\mathbb{R}^3)}+ \|f\|_{{N}^1([t_1,t_2]\times \mathbb{R}^3)}).
\end{equation}

Noting that $\|v\|_{L^{10}L^{10}([t_1,t_2]\times \mathbb{R}^3)}$ is small and combining Proposition \ref{61}, (\ref{79}), and (\ref{80}) we have:
\begin{equation} \label{81}
\begin{split}
\| \langle \nabla \rangle ^{1+c} f\|_{L^{3/2}L^{18/13}([t_1,t_2]\times \mathbb{R}^3)} &\lesssim \sqrt{\epsilon^7R}( \sqrt{\epsilon R} + \|v(t_1)\|_{{H}^1(\mathbb{R}^3)}+\| \langle \nabla \rangle f\|_{L^{3/2}L^{18/13}([t_1,t_2]\times \mathbb{R}^3)})\\
&\lesssim  \sqrt{\epsilon^7R}( \sqrt{\epsilon R} + \|v(t_1)\|_{{H}^1(\mathbb{R}^3)}+ \| \langle \nabla \rangle^{1+c} f\|_{L^{3/2}L^{18/13}([t_1,t_2]\times \mathbb{R}^3)}).\\
\end{split}
\end{equation}

For $\epsilon  \ll \frac{1}{R}$ this implies that
\begin{equation*}
\| \langle \nabla \rangle^{1+c} f\|_{L^{3/2}L^{18/13}([t_1,t_2]\times \mathbb{R}^3)}  \lesssim \sqrt{\epsilon^7R}(\sqrt{\epsilon R} +  \|v(t_1)\|_{{H}^1(\mathbb{R}^3)}) .\\
\end{equation*}

This gives us the necessary bound on $f$.  

Combining this result with Lemma 6.2, we have
\begin{equation}
\|v\|_{{S}^{1+c}([t_1,t_2]\times \mathbb{R}^3)} \lesssim  \|v(t_1)\|_{{H}^{1+c}(\mathbb{R}^3)} + C(\epsilon)
\end{equation}
for sufficiently small $\epsilon \ll \frac{1}{R}$, which completes the proof of Proposition 6.1.

\bibliographystyle{plain}

Department of Mathematics, UC Berkeley, Berkeley CA 94720

\end{document}